\documentclass[12pt,cd, reqno]{amsart}
\usepackage{amsfonts, amssymb,amsmath, amsthm, amsxtra, latexsym, mathrsfs, amscd}
\usepackage{amsmath, amssymb, amsthm, times, float, pifont, eufrak}
\usepackage{graphics}
\usepackage{stackrel}
\usepackage{hyperref}
\usepackage{tikz-cd}

\usepackage{verbatim}
\usepackage{dynkin-diagrams}
\usepackage{listings}

\usepackage[all]{xy}
\usepackage{ytableau}
\usepackage{stackrel}
\usepackage{pdfpages}
\usepackage[latin1]{inputenc} % 

\newtheorem{theo+}{Theorem}[section]
\newtheorem{prop+}[theo+]{Proposition}
\newtheorem{coro+}[theo+]{Corollary}
\newtheorem{lemm+} [theo+]{Lemma}
\newtheorem{deep+}  [theo+]  {Deep Result}
\newtheorem{fact+}  [theo+]  {Fact}
\theoremstyle{definition}
\newtheorem{exam+}  [theo+]  {Example}
\newtheorem{rema+}  [theo+]  {Remark}
\newtheorem{defi+}  [theo+]  {Definition}
\newtheorem{xca+}[theo+]{Exercise}
\newenvironment{theorem}{\begin{theo+}}{\end{theo+}}

\newenvironment{corollary}{\begin{coro+}}{\end{coro+}}
\newenvironment{lemma}{\begin{lemm+}}{\end{lemm+}}

\newenvironment{remark}{\begin{rema+}}{\end{rema+}}

\numberwithin{equation}{section}
\newtheorem{example}{Example}

\usepackage{color}

\bigskip

\def\refn#1.#2{\expandafter\def\csname#1\endcsname{[#2]}}
\def\refnr#1.{\csname#1\endcsname}

\def\eshd{\endsubhead}

\def\del{\delta}
\def\Del{\Delta}
\def\fpw{\frac{\partial\,}{\partial w}}

\def\la{\lambda}
\def\La{\Lambda}
\def\bbar{\,|\,}

\def\fa{\mathfrak a}
\def\fb{\mathfrak b}
\def\fe{\mathfrak e}
\def\fg{\mathfrak g}
\def\fgl{\mathfrak {gl}}
\def\fk{\mathfrak k}
\def\fh{\mathfrak h}
\def\fl{\mathfrak l}
\def\fm{\mathfrak m}
\def\fn{\mathfrak n}
\def\fp{\mathfrak p}
\def\fq{\mathfrak q}
\def\fs{\mathfrak s}
\def\fspin{\mathfrak {spin}}
\def\ft{\mathfrak t}
\def\fsu{\mathfrak {su}}
\def\fsl{\mathfrak {sl}}
\def\fspin{\mathfrak {spin}}
\def\fsp{\mathfrak {sp}}
\def\fso{\mathfrak {so}}
\def\fv{\mathfrak v}
\def\a{\alpha}
\def\b{\beta}
\def\epsi{\epsilon}
\def\Claminv2{|C(\Lambda)|^{-2}}
\def\ga{\gamma}
\def\Ga{\Gamma}
\def\varepsi{\varepsilon}
\def\lam{\lambda}
\def\Lam{\Lambda}
\def\ome{\omega}
\def\Ome{\Omega}

\def\dmua{d\mu_\a}
\def\dmub{d\mu_\beta}
\def\dmuabp{d\mu_{\a+\beta+p}}
\def\de{d\varepsilon}
\def\K#1#2#3{K^{#1}(#2,#3)}
\def\bK#1#2#3{\bar K^{#1}(#2,#3)}
\def\LtvDmua{L^2(D,d\mu_\a)}
\def\U#1#2{U_{#1}^{(#2)}}
\def\bU#1#2{\bar U_{#1}^{(#2)}}

\def\l{\left}
 \def\({\left(}
\def\r{\right}  
 \def\){\right)}
\def\Aa2D{A^{\a,2}(D)}
\def\bAa2D{\overline{A^{\a,2}(D)}}
\def\Ab2D{A^{\beta,2}(D)}
\def\bAb2D{\overline{A^{\beta,2}(D)}}
\def\AtA{\Aa2D \otimes \bAb2D}

\def\abs#1{\vert#1\vert}
\def\innerp#1#2{(#1, #2)}
\def\Innerp#1#2{\left(#1, #2\right)}
\def\ainnerp#1#2{\langle#1, #2\rangle}
\def\norm#1{\Vert#1\Vert}
\def\Norm#1_#2{\Vert#1\Vert_{#2}}
\def\Normsq#1{\Vert#1\Vert^2}
\def\normsq#1{\vert#1\vert^2}
\def\phipl12{\phi_{p_{l_1}, p_{l_2}}}
\def\phip01{\phi_{p_{0}, p_{0}}}

\newcommand\I{\int\limits}
\renewcommand\S{\sum\limits}
\renewcommand\P{\prod\limits}
\def\a{\alpha}
\def\b{\beta}
\def\ga{\gamma}
\def\Claminv2{|C(\Lambda)|^{-2}}
\def\del{\delta}
\def\Del{\Delta}

\def\varepsi{\varepsilon}

\def\sig{\sigma}
\def\Sig{\Sigma}
\def\Ga{\Gamma}
\def\lam{\lambda}
\def\Lam{\Lambda}

\def\ad{\operatorname{ad}}
\def\Ad{\operatorname{Ad}}
\def\Lie{\operatorname{Lie}}

\def\bd{\mathbf}
\def\ch{\operatorname{ch}}
\def\coth{\operatorname{coth}}

\def\det{\operatorname{det}}
\def\Det{\operatorname{Det}}
\def\diag{\operatorname{diag}}
\def\Ind{\operatorname{Ind}}
\def\diag{\operatorname{diag}}
\def\Mod{\operatorname{Mod}}
\def\min{\operatorname{min}}
\def\mult{\operatorname{mult}}
\def\max{\operatorname{max}}
\def\rank{\operatorname{rank}}
\def\tanh{\operatorname{tanh}}
\def\coth{\operatorname{coth}}
\def\ch{\operatorname{ch}}
\def\exp{\operatorname{exp}}
\def\Exp{\operatorname{Exp}}
\def\Pr{\operatorname{Pr}}

\def\Spin{\operatorname{Spin}}
\def\SO{\operatorname{SO}}
\def\sh{\operatorname{sh}}
\def\tanh{\operatorname{tanh}}
\def\sh{\operatorname{sh}}
\def\sgn{\operatorname{sgn}}
\def\tr{\operatorname{tr}}
\def\vol{\operatorname{vol}}
\def\Ker{\operatorname{Ker}}
\def\End{\operatorname{End}}
\def\Hom{\operatorname{Hom}}

\def\dmua{d\mu_\a}
\def\dmub{d\mu_\beta}
\def\dmuabp{d\mu_{\a+\beta+p}}
\def\de{d\varepsilon}
\def\K#1#2#3{K^{#1}(#2,#3)}
\def\bK#1#2#3{\bar K^{#1}(#2,#3)}
\def\LtvDmua{L^2(D,d\mu_\a)}
\def\U#1#2{U_{#1}^{(#2)}}
\def\bU#1#2{\bar U_{#1}^{(#2)}}

\def\Aa2D{A^{\a,2}(D)}
\def\bAa2D{\overline{A^{\a,2}(D)}}
\def\Ab2D{A^{\beta,2}(D)}
\def\bAb2D{\overline{A^{\beta,2}(D)}}
\def\AtA{\Aa2D \otimes \bAb2D}
\def\phipl12{\phi_{p_{l_1}, p_{l_2}}}
\def\phip01{\phi_{p_{0}, p_{0}}}
\def\m{\underline{\mathbf m}}
\def\ov{\overline{v}}
\def\s{\underline{\mathbf s}}
\def\u{\underline{\mathbf u}}
\def\ov{\overline{v}}
\def\intR{\int_{\mathbb R^2}}
\def\intC{\int_{\mathbb C^2}}

\def\m{\underline{\mathbf m}}
\def\n{\underline{\mathbf n}}

\def\za{\aa{}}
\def\zae{\"a{}}
\def\zo{\"o{}}

\def\Za{\AA{}}
\def\Zae{\"A{}}
\def\Zo{\"O{}}

\def\bc{\mathbb C}
\def\br{\mathbb R}
\def\bn{\mathbb N}
\def\bz{\mathbb Z}
\def\bq{\mathbb Q}
\def\bh{\mathbb H}
\def\bo{\mathbb O}
\def\bk{\mathbb K}

\def\alg/{algebra}

\def\Alg/{Algebra} 

\def\alt/{alternative} % \alt/ly
\def\anal/{analytic}
\def\analfunc/{\anal/\ \func/}
\def\Ans/{\it Answer. \normal}
\def\ass/{associative}
\def\nass/{non-\ass/}
\def\autom/{automorphism}
\def\homom/{homomorphism}
\def\isom/{isomorphism}
\def\bdd/{bounded}
\def\Bdd/{Bounded}
\def\bddsymdom/{bounded \sym/ \dom/}
\def\Cartdom/{Cartan \dom/}
\def\bdry/{boundary}
\def\bsd/{\bdd/ \symdom/}
\def\bv/{boundary value}
\def\cf/{{\it cf}\.}
\def\Cf/{{\it Cf}\.}
\def\charr/{character}
\def\coeff/{coefficient}
\def\comm/{commutative}
\def\cpct/{compact}
\def\compl/{complex}
\def\comp/{complex}
\def\Comp/{Complex}
\def\conf/{conformal}
\def\conj/{conjugate}
\def\conn/{connect}
\def\cont/{continuous}
\def\conv/{converge} % \conv/nce \conv/nt
\def\convc/{convergence}
\def\convt/{convergent}
\def\convx/{convex}
\def\coord/{coordinate}
\def\lcoord/{local coordinate}
\def\Corr/{Corresponding}
\def\corr/{corresponding}
\def\corrd/{correspond}
\def\cov/{covariant}
\def\decomp/{decomposition}
\def\deco/{decompose}
\def\diff/{different} % \diff/iable \diff/ial
\def\Diff/{Different} % \Diff/able \Diff/ial
\def\dimn/{dimension} % \dimen/al
\def\distr/{distribution} % \distr/al
\def\div/{diverge} % \div/nt 
\def\dom/{domain}
\def\eg/{\hbox{\it e.g}\.}
\def\eigenf/{eigen\-\func/}
\def\eigensp/{eigen\-space}
\def\eigenv/{eigen\-value}
\def\eq/{equation}
\def\equa/{equation}
\def\de/{\diff/ial \equa/}
\def\do/{\diff/ial operator}
\def\ode/{ordinary \de/}
\def\pde/{partial \de/}
\def\pdo/{partial \diff/ial operator}
\def\psdo/{pseudo \diff/ial operator}
\def\fin/{finite}
\def\Ex/{\it Example.\ \normal}
\def\Exnr#1/{\it Example #1.\ \normal}
\def\foll/{follow}
\def\follg/{following}
\def\Follg/{Following}
\def\func/{function}
\def\Func/{Function}
\def\Fonc/{Fonc\-tion}
\def\fonc/{fonc\-tion}
\def\Funk/{Funk\-tion}
\def\funk/{Funk\-tion}
\def\gen/{general}
\def\har/{harmonic}
\def\Hint/{\it Hint. \normal}
\def\hist/{historic}
\def\histcl/{historical}
\def\hol/{holo\-morphic}
\def\homog/{ho\-mo\-ge\-ne\-ous}
\def\hyp/{hyper\-bolic}
\def\hyperg/{hyper\-geometric}
\def\ie/{\hbox{\it i.e.}}
\def\iff/{if and only if}
\def\ineq/{inequality}
\def\infra/{{\it inf\-ra}}
\def\ultra/{{\it ult\-ra}}
\def\Inpart/{In particular}
\def\inpart/{in particular}
\def\instof/{instead of}
\def\interps/{interpolation space}
\def\interp/{interpolation}
\def\Interp/{Interpolation}
\def\interpr/{Interpretation}
\def\Intr/{Introduction}
\def\intv/{interval}
\def\inv/{invariant}
\def\invc/{invariance}
\def\Iowords/{In other words}
\def\iowords/{in other words}
\def\ipr/{inner product}
\def\irred/{irreducible}
\def\lb/{line bundle}
\def\lin/{linear}
\def\lhs/{left hand side}
\def\rhs/{right hand side}
\def\loc/{local}
\def\math/{mathematic} 
\def\mathcn/{\math/ian}
\def\manif/{manifold}
\def\meas/{measure}
\def\measl/{measurable}
\def\mero/{mero\-morphic}
\def\mon/{monomial}
\def\monog/{monogenic}
\def\multy/{multiply}
\def\multn/{multiplication}
\def\nas/{necessary and sufficient}
\def\nbd/{neighborhood}
\def\neg/{negative}
\def\nondeg/{nondegenerate}
\def\Oohand/{On the other hand}
\def\oohand/{on the other hand}
\def\Oonhand/{On the one hand}
\def\oonhand/{on the one hand}
\def\oper/{operator}
\def\orth/{ortho\-gonal}
\def\orthon/{ortho\-normal}
\def\otoh/{on the other hand}
\def\quat/{quaternion}
\def\pp/{\hbox{a. e.}}
\def\psh/{plurisubharmonic}
\def\pol/{polynomial}
\def\pot/{potential}
\def\pos/{positive}
\def\princ/{principle}
\def\prob/{probability}
\def\proj/{projective}
\def\projn/{projection}
\def\Proof/{\it Proof:\normal}
\def\Rem/{\it Remark\normal}
\def\Remnr#1/{\it Remark\ \normal #1. }
\def\rep/{representation}
\def\reps/{representations}
\def\meta/{metaplectic representation}
\def\repr/{reproducing}
\def\reprker/{reproducing kernel}
\def\resp/{respective} % \resp/ly
\def\resply/{respectively}
\def\restr/{restriction}
\def\sa/{self-adjoint}
\def\st/{such that}
\def\Sec{Section~}
\def\SSec{Subsection~}
\def\sol/{solution}
\def\ru/{space}
\def\sph/{spherical}
\def\ssp/{sub\ru/}
\def\sym/{symmetric}
\def\Sym/{Symmetric}
\def\symb/{symbol}
\def\symbc/{symbolic}
\def\symdom/{\sym/ domain}
\def\symp/{symplectic}
\def\Theor#1/{\fet Theorem #1.\ \normal}
\def\Lem#1/{\fet Lemma #1.\ \normal}
\def\Lemma/{\fet Lemma.\ \normal}
\def\topl/{topology}
\def\topll/{topological}
\def\transf/{transform}
\def\transl/{translation}
\def\transfn/{transformation}
\def\transv/{transvectant}
\def\trig/{trigonometric}
\def\tril/{trilinear}
\def\trilf/{trilinear form}
\def\uhp/{upper halfplane}
\def\uhs/{upper halfspace}
\def\vb/{vector bundle}
\def\vf/{vector field}
\def\vsp/{vector space}
\def\wrt/{with respect to}
\def\Wlog/{Without loss of generality}
\def\a{\alpha}
\def\ba{\beta}
\def\ve{\varepsilon}
\def\e{\varepsilon}
\def\E{\Varepsilon}
\def\f{\varphi}
\def\gam{\gamma}
\def\Gam{\Gamma}
\def\lam{\lambda}
\def\Lam{\Lambda}
\def\om{\omega}
\def\Om{\Omega}
\def\sig{\sigma}
\def\Sig{\Sigma}
\def\th{\theta}
\def\ups{\upsilon}
\def\Ab/{Abel}
\def\Ban/{Banach}
\def\Bansp/{\Ban/ space}
\def\Belt/{Bel\-tra\-mi}
\def\Berg/{Berg\-man}
\def\Bern/{Ber\-nou\-lli}
\def\Berz/{Berezin}
\def\Bess/{Bessel}
\def\Cart/{Car\-tan}
\def\Cay/{Cay\-ley}
\def\CG/{Clebsch-Gordan}
\def\Cl/{Clifford}
\def\CR/{Cauchy-Rie\-mann}
\def\Dir/{Dirichlet}
\def\Eucl/{Euclide}
\def\Eucln/{Euclidean}
\def\F/{Fourier}
\def\Hank/{Hankel}
\def\Hankf/{\Hank/ form}
\def\Herm/{Hermite}
\def\Hilb/{Hilbert}
\def\Hilbs/{Hilbert space}
\def\Hilbsp/{Hilbert space}
\def\HS/{Hilbert-Schmidt}
\def\Lag/{La\-grange}
\def\Lap/{La\-place}
\def\LapBelt/{\Lap/-\Belt/}
\def\Leb/{Lebesgue}
\def\Marc/{Mar\-cin\-kie\-wicz}
\def\Moeb/{Moebius}
\def\Moebt/{Moebius transformation}
\def\Moebtransfn/{Moebius transformation}
\def\Pla/{Plan\-che\-rel}
\def\Poin/{Poin\-car\'e}
\def\Riem/{Rie\-mann}
\def\Riemn/{\Riem/ian}
\def\psRiemn/{pseudo-\Riem/ian}
\def\Riems/{Rie\-mann surface}
\def\Schroe/{Schr\"odinger}
\def\Weier/{Weier\-strass}
\def\re{\operatorname{Re}}
\def\im{\operatorname{Im}}
\def\id{\text{id}}
\def\anal/{analytic}
\def\bsd/{bounded symmetric domain  }
\def\bdd/{bounded}
\def\calc/{calculation}\def\conj{conjugate}
\def\calci/{calculating}\def\eg{e.g.}
\def\conj/{conjugate}
\def\deco/{decomposition}
\def\eg/{e.g.}
\def\fct/{function}
\def\gp/{group}
\def\hw/{highest weight}
\def\hwv/{highest weight vector}
\def\hwvs/{highest weight vectors}
\def\lw/{lowest weight}
\def\lwv/{lowest weight vector}
\def\lwvs/{lowest weight vectors}
\def\hds/{holomorphic discrete series}
\def\iff/{if and only if}
\def\inv/{invariant}
\def\irrde/{irreducible decomposition}
\def\meas/{measure}
\def\transf/{transform}
\def\rep/{representation}
\def\resp/{respectively}
\def\inters/{intertwines}
\def\interg/{intertwining}
\def\meta/{metaplectic representation}
\def\qu/{quaternion}
\def\rep/{representation}
\def\symdom/{ symmetric domain}
\def\st/{such that}
\def\shd/{subhead}
\def\transf/{transform}
\def\wrt/{with respect to}

\def\ra{\rightarrow}
\def\grad{\operatorname{grad}}
\def\supp{\operatorname{supp}}
\def\Norm#1#2#3{\Vert#1\Vert^{#3}_{{#2}}}
\def\bigNorm#1#2#3{\big\Vert#1\big\Vert_{#2}^{#3}}
\def\biggNorm#1#2#3{\bigg\Vert#1\bigg\Vert_{#2}^{#3}}
\def\sprod#1{\langle#1\rangle}
\def\dd#1#2{\dfrac{d#1}{d#2}}
\def\dde#1{\dfrac{d\phantom{#1}}{d#1}}
\def\DD#1#2{\dfrac{\partial#1}{\partial#2}}
\def\DDe#1{\DD{\phantom{#1}}{#1}}
\def\DtD#1#2#3{\dfrac{\partial^2#1}{\partial#2\partial#3}}
\def\DtDe#1#2{\dfrac{\partial^2\phantom{#1#2}}{\partial#1\partial#2}}
\def\DtDt#1#2{\dfrac{\partial^2#1}{\partial#2^2}}
\def\DtDte#1{\dfrac{\partial^2\phantom{#1}}{\partial#1^2}}
\def\DEF{\,\,{{\overset\text{def}\to=}}\,\,}
\def\FED{\,\,{{\overset\text{fed}\to=}}\,\,}
\def\oh{\operatorname{o}}
\def\Oh{\operatorname{O}}
\def\tr{\operatorname{tr}}

\def\threestars{\bigskip\centerline{* * *}\medskip}

\def\abstractname{Abstract}
\def\chrefname{References}

\begin{document}
\title[$K$-types of 
unitary highest weight representations]{
  On the $K$-types of certain families of unitary highest weight representations of
  Hermitian Lie groups 
}
\author{Pavle Pand\v zi\'c and Genkai Zhang}
%%%%
%\email{}
\address[Pand\v zi\'c]{Department of Mathematics, Faculty of Science, University of Zagreb, Bijeni\v cka 30, 10000 Zagreb, Croatia}
\email{pandzic@math.hr}
\address[Zhang]{Mathematical Sciences, Chalmers University of Technology and
Mathematical Sciences, G\"oteborg University, SE-412 96 G\"oteborg,
Sweden.}
\email{genkai@chalmers.se}

\subjclass[2020]{22E46, 22E47}
\keywords{semisimple Lie group, representation, unitary highest weight representation, K-type}

\thanks{
Research by P.~Pand\v zi\'c is supported by "Implementation of cutting-edge research and its application as part of the Scientific Center of Excellence for Quantum and Complex Systems, and Representations of Lie Algebras", Grant No. PK.1.1.10.0004, co-financed by the European Union through the European Regional Development Fund - Competitiveness and Cohesion Programme 2021- 2027., by the Croatian Science Foundation (HRZZ), grant no. IP-2025-02-6514, and
by the European Union - NextGenerationEU through the National Recovery and Resilience Plan 2021-2026. Institutional grant of University of Zagreb Faculty of Science (IK IA 1.1.3. Impact4Math). 
 Research by G.~Zhang is partially supported by 
the Swedish
Science Council (VR)
}
%\draft
\begin{abstract}
  Let $(\mathfrak g, \mathfrak k)$ 
  be the complexication of a   Hermitian symmetric 
  pair  and let $\gamma_1 <\cdots<\gamma_r$
  be the Harish-Chandra strongly orthogonal roots, with
 $\gamma_1$ the lowest  non-compact root, and let $\zeta$ be the
  normalized central character of $\fk$.
  We consider the unitary highest weight
  representations $L(\lambda(x))$ of 
$\mathfrak g $
of
  highest weights of the form $\lambda(x)=-k\gamma_1 -x\zeta$, $x\in\mathbb{R}$. In particular, we are interested in the case $x=a$, the last reduction point in
  the analytic continuation in $x$.
 We prove that
  $L(-k\gamma_1-a \zeta
  )$, $k\ge 1$,
   is $K$-multiplicity free and
   we describe
   all its K-types. 
\end{abstract}

\maketitle

\section{Introduction}

Let $(G, K)$ 
be an irreducible Hermitian
symmetric pair
and let $(\mathfrak g,\mathfrak k)
$ be the complexification
of the symmetric
pair $(\fg_0, \fk_0)$
of the Lie algebras of $G$ and $K$.
In the present paper we shall find the $K$-types
for a family of unitary highest weight representations of $\mathfrak g$. 

To describe
our result
let $\gamma_1< \cdots<\gamma_r$ be the Harish-Chandra strongly orthogonal roots
 with $\gamma_1$ the lowest non-compact root. 
The analytic continuation
of the holomorphic discrete series
$L(\lambda +x\zeta), x<<0$ in $x
$ has maximally $r=\rank G/K$ reduction points,
and has precisely
$r=\rank G/K$ points in the scalar case $L(x\zeta)$,
whose $K$-types
are now well-known.
The last non-trivial
reduction
point in this scalar family
is given by $x=-\frac a2$ and 
is in some cases the minimal holomorphic representation, 
where $a$ is certain root multiplicity.
We consider
the vector-valued case
$\lambda=-k\gamma_1-a\zeta$,
 $k\ge 1$.
We find all the 
$K$-types in 
$L(-a \zeta-k\gamma_1)$
and prove that it is always multiplicity free.

For the proof we shall combine
various methods
and results. We realize the holomorphic representations
as holomorphic polynomials on the symmetric space $D=G/K$ in its Harish-Chandra realization.
The first
elementary method
is  the tensor product 
decomposition
\begin{equation}
\label{tensor-L-min}
L(-\frac a2\zeta)
\otimes
L(-\frac a2\zeta)
=\sum_{k=0}^\infty
L(-a\zeta-k\gamma_1). 
\end{equation}
This motivates 
also
our problem of
finding the $K$-types in $L(-a\zeta-k\gamma_1)$ as irreducible components in the tensor product of
copies of
minimal holomorphic representation.
Roughly speaking, 
(\ref{tensor-L-min}) 
 is the Taylor expansion
of holomorphic functions in two
variables $(z, w)\in D\times D$ 
along the diagonal $z=w$.
The $K$-type decomposition
of $L(-\frac a2\zeta)$ is \cite{Wa}
\begin{equation}
    \label{L-module}
L(-\frac a2\zeta)
=\sum_{m=0}^\infty
F(-m\gamma_1
-\frac a2\zeta).
\end{equation}
We first find
the $K$-types
in the whole
tensor product
(\ref{tensor-L-min}) 
by
decomposing
\begin{equation}
\label{tensor-F-m-l}
F(-m\gamma_1
-\frac a2\zeta)
\otimes 
F(-l\gamma_1
-\frac a2\zeta)
=\sum_{\mu}
F(\mu
-a\zeta),
\end{equation}
counting multiplicities.
This is done by  case by case computations
using Littlewood-Richardson
rule when $K$
is classical
 and some results of Stembridge
\cite{St}
when $(\mathfrak g, \mathfrak k)=(\fe_7, \fe_6 +\fso(2))$.
Thus we have produced all the $K$-types in  the tensor product (\ref{tensor-L-min}).
Next we use 
 the general
result of \cite{EJ}
on the description
of the K-types 
for general
unitary highest
weight module $L(\lambda)$ in terms of
PRV (Parthasarathy-Ranga Rao-Varadarajan) components. This
result can be effectively used
for the symmetric pair 
\[
(\fg_0, \fk_0)
=
(\fso(2, n), \fso(2)+\fso(n))
\]
of rank two to find the K-types
in $L(-a\zeta -k\gamma_1)$. 
For symmetric spaces
 $D$ of the pairs
$(\fso^*(2n), u(n))$, $n\ge 4$,
$(\fe_{6(-14)}, \fso(10)+\fso(2)),
(\fe_{7(-25)}, 
\fe_{6}+\fso(2))
$ we use
the restriction
of the decomposition
(\ref{tensor-L-min}) and their
irreducible
$K$-subspaces
as explicit holomorphic polynomials $f(z, w)$ in two variables
in $(z, w)\in D\times D$
on the subdomain
$D'\times D'$
where $D'\subset D$
corresponds
to the pair
$(\fso(2, n),
\fso(2)+\fso(n))
$.
We prove that
all $\fg$-highest weights
and $\fk$-highest weights
involved
are determined
by the subroot system corresponding
to 
$(\fg', \fk')=(\fso(2, n),
\fso(2)+\fso(n)),
$
and we find 
the $K$-types
in $L(-a\zeta -k\gamma_1)$
using the
$K'$-types
in 
$L(-a\zeta' -k\gamma_1')$
for $(\fg', \fk')$.

It is well known that for groups
$G=U(r, r+b)$, $SO^*(2n)$, or $Sp(n, \mathbb R)$, all unitary highest weight $(\fg, K)$-modules
can be obtained by dual pair correspondence from finite-dimensional representations of compact groups.
This is well studied for
$G=Sp(n, {\mathbb R})$ and the corresponding dual pair  $
(G,  M)=
(Sp(n, {\mathbb R}),  O(m))
$.
The $K$-multiplicity can be
studied by considering 
the dual pair with its see-saw  pair
$(K, L)=
(U(n) , U(m))$.
There is a reciprocity formula for the
$K$-multiplicity, 
$$\mult_{K}(F(\mu), L(\lambda))
=\mult_{L}(F(\bar\la), F(\tilde \mu))
$$
where $\mu\rightarrow \tilde \mu$ is the
dual pair correspondence for
$(K, L)$ and $\bar\la\to\la$ is the
dual pair correspondence for
$(G, M)$. This
requires nevertheless concrete computations;
in our cases
of representations
$L(-a\zeta -k\gamma_1)$
of $(\fg_0, \fk_0)=
(\mathfrak u(r, r+b),
\mathfrak u(r)+\mathfrak u(r+b)),
(\fsp(r, \mathbb R),
\mathfrak u(r))$
the computations
are rather simple and we
present thus a second proof.
One advantage of the first  approach is
that it gives a method
for finding
the highest weight vectors
in the concrete realization.

As in \cite{EW}, it is possible to use results about $K$-types to obtain some branching laws of finite-dimensional representations. We include one such very explicit result; see Corollary \ref{cor fd}.

The highest 
weight
representations
considered
here can 
all be realized
as holomorphic sections
of a subbundle
of the symmetric
tensor 
$S^k(T'(G/K))$ 
of the dual holomorphic tangent bundle
tensored with a line
bundle. 
The maximal
submodule
can be obtained
as the kernel  of a covariant differential
operator of first
degree by the main
results
of \cite{DES}.
Therefore our
result gives
the  kernel
of this differential operator.

We remark also that generally it is an impossible  problem to find explicitly
the $K$-decomposition
of  a $(\fg, K)$-module, even for highest weight representations.
Our results show that it is still possible
for certain family of representations
by combining various techniques.

The main result of this paper is the following theorem:

\begin{theo+} 
\label{thm1.1}
Consider
  the analytic continuation
  of the family  $L(\lambda)$, 
  $\lambda=-k\gamma_1 +x\zeta$,
  $x<<0$, where $k>0$ is an integer if
  $ \fg\ne \fsp(r, \mathbb C)$
  and a positive integer or half integer if
   $ \fg= \fsp(r, \mathbb C)$.
  The last reduction point   
in the family $L(\lambda)$,
$\lambda=-k\gamma_1 +x\zeta$
and corresponding
$K$-types $\mu$
in $L(\lambda)$
are given as follows:
\begin{enumerate}
    \item 
If $\fg_0= \fsu(r, r+b)$, then $\lambda=-k\gamma_1-2\zeta$,
$$
\mu= -2\zeta
+ ((0, \cdots, 
0, -\mu_2, -\mu_1))_{\fgl(r)}
+
(\nu_1, \nu_2, 0,
\cdots,0)_{\fgl(r+b)}
$$
subject to the conditions
$\mu_1+\mu_2 =\nu_1+\nu_2$,
and
$$
(\mu_1-\mu_2) + 
(\nu_1-\nu_2)
\ge
2k  
\ge 
|(\mu_1-\mu_2) - (\nu_1-\nu_2)|.
$$

\item 
If  $\fg_0= \fsp(r, \mathbb R)$ and $k\ge 1$
is an integer or a half integer, 
then $\lambda = -\zeta -k\gamma_1$
$$
\mu=
\lambda- m_1 \gamma_1-\frac
{m_2}2 (\gamma_1+\gamma_2), \quad m_1,   m_2\ge 0. 
  $$
\item  If $\fg_0$ is different from $\fsu(r,r+b)$ and $\fsp(r, \mathbb R)$, and $k\ge 1$ is an  integer, then $\lambda
=-a\zeta -k\gamma_1$,
   $$
    \mu=\lambda -m_1\gamma_1 -m_2\gamma_2 -j \alpha,
   \quad m_1\ge m_2\ge 0, \ 0\le j\le\min\{k,  \frac{m_1-m_2}2\} $$
   where $\alpha$ is the simple compact root connected 
   to the non-compact
   simple root $\gamma_1$.
    \end{enumerate}
 In all cases
 the $K$-types
 are of multiplicity one.
\end{theo+}

We shall prove Theorem \ref{thm1.1} 
through  case by case computations. To do this, we first review some necessary results about structure of the pairs $(G,K)$ in Section 2, and then some results about tensor products of representations in Section 3. After that we 
 prove Theorem \ref{thm1.1} for $(G,K)=(U(r,r+b),U(r)\times U(r+b)$ and for $(G,K)=(Sp(r,\mathbb{R},U(r))$ in Section 4, for $(G,K)=(SO_0(2,m),SO(2)\times SO(m))$ in Section 5, for $(G,K)=(SO^*(2n),U(n))$ in Section 6, for $(G,K)=(E_{6(-14)},Spin(10)\times SO(2))$ in Section 7, and for $(G,K)=(E_{7(-25)},E_6\times SO(2))$ in Section 8.

\section{Preliminaries}
We fix notation and recall some well known facts on unitary highest weight
representations; see  \cite{EHW, EJ}.

\subsection{Root spaces of
  $\fg$ and  $\fk$}
Let $D=G/K$ be an irreducible non-compact Hermitian symmetric
space and $\fg_0=\fp_0+\fk_0$ be the  Cartan decomposition
of the Lie algebra $\fg$ of $G$.
The complexification $\fg$
of $\fg_0$ has a Harish-Chandra
decomposition 
$\fg=\fp^{+}+\fk + \fp^{-}$ and $D=G/K$ 
is realized as a bounded open domain in $\fp^+$ 
via the Harish-Chandra 
realization.

We use the Jordan triple characterization of the bounded symmetric
domain $D=G/K$.
So $V=\fp^+$ is equipped with a Jordan triple product
$\{x, \bar y, z\}=D(x,  \bar y)z$ which is bi-linear
and symmetric in $x, z$ and conjugate linear in $y$ so that
when  realized as holomorphic vector fields on $D$, the Cartan decomposition
 $\fg=\fk +\fp$
takes the form
  \begin{equation}
    \label{cartan-p}
  \fp=\{\xi(v)=v-Q(z) \bar v; v\in V\}, 
  \end{equation}
  with $2Q(z) \bar v =D(z, \bar v) z$
  and
  $$
 [\xi(v), \xi(u)]= D(v, \bar u)- D(u, \bar v). 
  $$
  The underlying complex space of $\overline V=\overline{\fp^+}$ is identified with $\fp^-$.

Let $\{e_1, e_2, \dots, e_r\}$
be a Jordan frame of  $V$, where $r$ is the real rank of $G/K$.   The Peirce
decomposition of $V$ with respect to this frame is
\begin{equation}
  \label{Peirce}
V=\S_{r\ge j\ge k\ge 0}V_{jk},  \quad V_{jk}=V_{kj}:= \{v\in V; D(e_i,
e_i)v=\dfrac{\delta_{ij}+\delta_{ik}}2 v, \ i=1, \dots, r\}.
\end{equation}
The Jordan product rules
of these spaces are
\begin{equation}
\label{Peire-prod}
\{V_{jk}, \overline{V_{kl}}, V_{lm}\}
\subset 
V_{jm}    
\end{equation}
 and all other products are vanishing.
Let
\begin{equation}
\label{ab-def}
a=\dim V_{jk},\ r\ge j>k\ge 1,
\quad 
b=\dim V_{j0},
r\ge j\ge 1, 
\end{equation}
and
\begin{equation}\label{genus}
    p:=1+(r-1)+ b =\tr_{V} D(e_1, e_1).
\end{equation}
 Then the dimension $d=1+\dfrac a2 r(r-1) +rb$. The number $\frac a2$
 is the gap of the reducible points
in the analytic continuation
$L(-x\zeta)$ and $p-1$
is the limit of discrete
series; see Section 
\ref{sect-23} below.

Let $
\ft_-
\subset \fk$ 
be the abelian subspace
$$
\ft_-
=\sum_{j=1}^r\mathbb C{D(e_j, e_j)}
$$
and extend it to a Cartan subalgebra 
$\ft
=\ft_- + \ft_+ $
of $\fk$,
and thus also of $\fg$.
Let $\Delta$ be the set of roots of $\fg$
with respect to  $\ft$. Then
$\Delta= \Delta_c\cup  \Delta_n$,
union of compact and non-compact roots, 
and $\Delta^+= \Delta_c^+\cup  \Delta_n^+$
the corresponding positive roots.
Denote the
compact root space
of $\mathfrak \alpha\in \Delta_c$ by
$\mathfrak k_{\alpha}$. We choose the Harish-Chandra strongly
orthogonal roots
${\gamma_1<\gamma_2<\dots <\gamma_r}$
as the dual basis of
$\frac 12 {D(e_j, e_j)}$ and extend them
on $\ft_- + \ft_+ $
vanishing on $\ft_+ $. 
We shall also need
the highest non-compact root  $\beta$,
i.e. the highest
root of $\fp^+$;
$\beta=\gamma_r$ if
$D$ is a tube domain. 
The compact
roots are of the form
\begin{equation}
    \label{cpt-root-sp}
    \Delta_c^+|_{\ft_-}
=\{\dfrac{\gamma_j -\gamma_k}2;
j>k\ge 1\}
\cup
\{-\dfrac{\gamma_j}2;
j\ge 1\}
\end{equation}
and 
$
\fp^{+}_{jk}=V_{jk}$, 
$\fp^{+}_{j}=V_{j0}
\subset \fp^+=V$ are  {\it positive  non-compact} root spaces
with roots
$\dfrac{\gamma_j +\gamma_k}2, \dfrac{\gamma_k}2  
$. Let 
\begin{equation}
\label{pos-cpt-rts}
\mathfrak k^+
=
\sum_{j>k\ge 1}
\fk_{jk}
+\sum_{j\ge 1}\fk_{0j}
\end{equation}
be
the  sum
compact positive root spaces of 
with respect to
$\ft_-$.
 They
 are described in \cite[Lemma 1.5, Theorem
1.7(1)]{Up-AJM};
note that
the co-root of $\gamma_j$
in 
\cite{Up-AJM}, 
 is numbered
as $C_{r+1-j}$.

\begin{lemm+}
  \label{root-sps}
  The compact positive
  and zero root spaces 
with respect
to $\mathfrak t_-$
are given by
$$
\fk_{jk}
=\text{Span}\{D(u, \bar v);  \quad u\in V_{jl},  v\in V_{lk}, 0\le l\le r\}, \ 
1\le k< j \le r; 
$$
$$
\fk_{0j}
=\text{Span}\{D(u, \bar v); 
\quad u\in V_{0l}, v\in V_{lj}, 0\le l\le r; \},\ 
1\le j \le r,
$$
$$
\mathfrak t\oplus \sum_{\alpha,\,
\alpha|_{\mathfrak t_-}
=0}^{\oplus}
\mathfrak k_{\alpha}
=
\sum_{j=0}^r 
\mathfrak k_{jj}, \quad 
\mathfrak k_{jj}= 
\text{Span}\{D(u, \bar v); 
\quad u\in V_{jk}, v\in V_{kj}, 0\le k\le r \}.
$$
 \end{lemm+}

Observe that the second sum above
is not a direct sum.

%and $D(e_r, \bar v)=0$ if $v\in V_{jk}$ for $r>j\ge k$.
%% \begin{proof} 
%%   for a fixed $j$, then 
%%   $$
%%   [ D(e_r, \bar e_r), D(e_r, \bar v)]
%%    =   D(D(e_r, e_r)e_r,  \bar v)-
%%    D(e_r, \overline{D(e_r, e_r) v}) 
%%    = 2 D(e_r,  \bar v)-
%%       D(e_r, \overline{v}) =D(e_r,  \bar v) 
%%    $$
%%    and 
%% $$      [ D(e_k, \bar e_k), D(e_r, \bar v)]
%%    =   D(D(e_k, e_k)e_r,  \bar v)-
%%    D(e_r, \overline{D(e_k, e_k) v}) 
%%    = -  \delta_{jk}D(e_r, \overline{v}) 
%% $$
%%  for $k\le 1$. This proves the first claim. The second 
%%  one $D(e_r, \bar v)=0$ for $v\in V_{jk}$, $r>j\ge k$, 
%%  is a general fact for Jordan triple \cite{Loos-bsd}. 
%% \end{proof}

 For any root $\alpha\in
 \Delta^+$ let $\alpha^{\vee}=\dfrac{2\alpha}{\(\alpha,
  \alpha\)}$
be the corresponding coroot. Let $\rho$ be the half sum of positive
roots in 
$\Delta^+$.
Then
 $$
 \(\rho, {\beta}^{\vee}\) = p-1=1+a(r-1)+b.
  $$

\subsection{The space of polynomials on $\fp^+$}

We recall first the 
Hua-Kostant-Schmid decomposition \cite{Schmid}.
The space $\mathcal P(\mathfrak p^+)$
of holomorphic polynomials on $\fp^+$ 
has the
decomposition
\begin{equation}
\label{poly-deco}
\mathcal P(\fp^+)
=\sum_{m_1\ge m_2\ge \dots \ge m_r\ge 0}
F(-\m)
\end{equation}
under $\mathfrak k$, 
where each space
$F(-\m) 
$ has 
highest weight
\[
-\m= 
{-m_1\gamma_1- m_2\gamma_2-\dots - m_r\gamma_r}
\]
with $m_1 \ge \dots\ge m_r\ge 0$ being integers.

\subsection{Unitary
highest weight representations}
\label{sect-23}

The classification of the unitary highest weight
representations of $G$
was done in \cite{EHW, Jak}; see also \cite{PPST,PPSST}.
We fix the central character $\zeta$ of 
$\mathfrak k
$ by
$$
\zeta (X) =\frac{1}{p}\tr_{\fp^+} \ad X, \quad X\in  \mathfrak k.
$$
Then $\zeta(D(e_1, e_1))=\zeta(\beta^\vee)=1$.
Let $\lambda \in\ft^\ast$ 
be the highest weight of a finite-dimensional representation $F(\lambda)$ 
of $K$, and denote by $\tau_\lambda$ the action of $K$
and by $x(\lambda)$
the central character of $F(\lambda)$ (shifted by $\rho(\beta^\vee)$)
$$
x(\lambda )
=(\lambda +\rho)(\beta^\vee).$$
When $x(\lambda) <0$,
there is a holomorphic
discrete series
$(L(\lambda), \pi_{\lambda})$ of $G$
with highest weight $\lambda$.
Writing $\lambda $  as 
\[
\lambda =
(\lambda-x(\lambda)\zeta)
+ x(\lambda)\zeta=
\lambda_0+x(\lambda)\zeta,
\] 
$x(\lambda)=0$
corresponds to  the limit
of the discrete series
and $x(\lambda)$ is the parameter used in
\cite{EHW} for the
classification of unitary highest weight representations.

The  
$(\mathfrak g, K)$
module $L(\lambda)$
can be realized
as a subspace of 
holomorphic
polynomials on $\fp^+$
taking values in $F(\lambda)$
and we shall use this realization.
We shall need 
the precise
formulas
for actions  of 
elements in $ \mathfrak k+\fp^+$. 
They are given by \cite{JV}
\begin{equation}
\label{k-p-act}
(\pi_\lambda (X) p)(z) = 
-(\partial_X p)(z),\ \  
(\pi_\lambda(Y) p)(z) = 
\tau_\lambda(Y) p(z)
- (\partial_{[Y,  z]} p)(z), \ 
z\in \fp^+
\end{equation}
where 
$X\in \fp^+, Y\in \fk$
and $\partial_X$
is the differentiation
along $X$ on functions.

When $\lambda= z\zeta$ 
is of scalar  type
the analytic continuation of $L(\lambda)$  was found 
earlier in \cite{Wa} and the 
unitarity points are 
the set  
$$
z\in  (-\infty, -\dfrac a2(r-1))\cup\{-\dfrac a2(r-j),\ j=1, \dots, r\}
$$
At the last non-trivial reduction point we have
$L(-\dfrac a2\zeta)$
is a Ladder representation,
\begin{equation}\label{last-W-pt}
L(-\dfrac a2\zeta)
=\sum_{m\ge 0}F({-\dfrac a2\zeta -m\gamma_1}).
\end{equation}
When $\fg_0=\fsp(r, \mathbb R)$
the metaplectic representation
of $Mp(r, \mathbb R)$
is a direct
sum $
L(-\frac 12\zeta)
+
L(-\frac 12\zeta -\epsilon_r)
$ as a representation 
of $\fg_0=\fsp(r, 
\mathbb R)$,
where $-\epsilon_r$
corresponds to  the dual representation
of $U(r)$ on $\mathbb C^r$.
$
L(-\dfrac 12\zeta-\epsilon_r)
$
is also a ladder
representation
$$
L(-\dfrac 12\zeta
-\epsilon_r)
=\sum_{m\ge 0}F({-\dfrac 12\zeta -m\gamma_1}
-\epsilon_r)
=\sum_{m\ge 0}F(-\dfrac 12\zeta -(2m+1)\epsilon_r).
$$

\section{Tensor products
of representations}

The method
of tensor products
has been a powerful
method in constructing
unitary highest weight
representations
of $\fg$. Here we
shall also use this method to find
$K$-types in 
some of these
representations.

\subsection{Tensor product 
$L(-\dfrac a2\zeta)   \otimes 
  L(-\dfrac a2\zeta)$ }

Tensor products of
holomorphic discrete
series representations have been studied
in \cite{Re}. Here we shall
need the tensor product
of  representations at
the last non-trivial
 Wallach point.
We prove a slightly
 more general result,
which can also be
deduced from
the argument in 
\cite{JV}.
We  apply it then to our
case. 

\begin{lemma}
Consider 
the tensor product
$L(\mu)
\otimes 
L(x\zeta )
$
of 
 an arbitrary
unitary 
 highest weight
representation
 $L(\mu)$
and
a unitary 
scalar 
highest weight
representation 
 $L(x\zeta)$.
 Let 
$$
L(\mu)
\otimes 
L(x\zeta )
=\sum_{\nu} c_{\nu}L(\nu)
$$
be its decomposition into 
irreducible 
representations, 
where $c_\nu$
is the multiplicity.
Then any $\fg$-highest
weight vector $F$ in 
$c_{\nu}L(\nu)$
is of the form $F(z, w)
=f(z-w)$
where $f\in 
L(\mu)$ is a $\fk$-highest
weight vector in $L(\mu)$.
\end{lemma}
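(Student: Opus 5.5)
We realize $L(\mu)\otimes L(x\zeta)$ as holomorphic polynomials $F(z,w)$ on $\fp^+\times\fp^+$ with values in $F(\mu)\otimes F(x\zeta)=F(\mu+x\zeta)$; the latter is one-dimensional twisted, so it is just $F(\mu)$ up to the scalar character. By (\ref{k-p-act}), the diagonal action of $X\in\fp^+$ is $-(\partial_X^{(z)}+\partial_X^{(w)})F$. A $\fg$-highest weight vector in $c_\nu L(\nu)$ is annihilated by $\fp^+$ (the holomorphic realization has $\fp^+$ acting by lowering the polynomial degree, so highest weight vectors are the $\fp^+$-invariants) and is a $\fk$-highest weight vector for the diagonal $\fk$-action. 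The plan is to use the first condition to force the form $f(z-w)$, and the second to force $f$ to be $\fk$-highest in $L(\mu)$.

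\emph{Step 1: translation invariance.} The condition $(\partial_X^{(z)}+\partial_X^{(w)})F=0$ for all $X\in\fp^+$ says $F$ is invariant under simultaneous translations $(z,w)\mapsto(z+t,w+t)$. Changing variables to $(u,w)=(z-w,w)$, this says $\partial_w F=0$ in the new coordinates, so $F(z,w)=f(z-w)$ for a polynomial $f$ on $\fp^+$ with values in $F(\mu)$. I expect this to be routine.

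\emph{Step 2: the $\fk$-condition.} For $Y\in\fk$ the diagonal action is $\tau_\mu(Y)F+x\zeta(Y)F-\partial^{(z)}_{[Y,z]}F-\partial^{(w)}_{[Y,w]}F$. Since $[Y,\cdot]$ is linear, $\partial_{[Y,z]}^{(z)}+\partial_{[Y,w]}^{(w)}$ applied to $f(z-w)$ gives $(\partial_{[Y,u]}f)(u)$ at $u=z-w$. Hence the diagonal $\fk$-action on $f(z-w)$ equals $\pi_\mu(Y)f$ (as in (\ref{k-p-act})) shifted by the scalar $x\zeta(Y)$. So $F$ is $\fk$-highest of weight $\nu$ iff $f$ is $\fk$-highest of weight $\nu-x\zeta$ in the $\fk$-module $\mathcal P(\fp^+)\otimes F(\mu)$.

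\emph{Step 3: main obstacle.} It remains to show $f$ actually lies in $L(\mu)$, i.e. in the image of the realization of $L(\mu)$ inside $\mathcal P(\fp^+)\otimes F(\mu)$, not merely in the ambient polynomial space. Here we use that $F$ lies in $L(\mu)\otimes L(x\zeta)$. The restriction map $F\mapsto F(\cdot,0)$ carries $L(\mu)\otimes L(x\zeta)$ into $L(\mu)$ tensored with evaluation at $0$ on the second factor, and evaluation at $0$ of $L(x\zeta)$ is the constant-term projection, a linear functional on $L(x\zeta)$. Thus $f(z)=F(z,0)\in L(\mu)$. Note that $L(\mu)$ is closed under this operation even when it is a proper quotient realized as a subspace, because it is a partial evaluation of elements of the algebraic tensor product. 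If needed for nonalgebraic completions, we argue on $K$-finite vectors only, which suffices since highest weight vectors are $K$-finite. Combining with Step 2, $f$ is a $\fk$-highest weight vector of $L(\mu)$, which proves the lemma.
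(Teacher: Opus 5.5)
Your proposal is correct and follows essentially the same route as the paper. In both, annihilation by $\fp^+$ gives $(\partial^{(z)}_X+\partial^{(w)}_X)F=0$, hence $F(z,w)=f(z-w)$ with $f=F(\cdot,0)\in L(\mu)$. The diagonal $\fk$-action on $f(z-w)$ then reduces to $\pi_\mu$ acting on $f$, since the $\zeta$-term vanishes on $\fn_\fk$. Your Step 3 justifies $f\in L(\mu)$ via $\mathrm{id}\otimes\mathrm{ev}_0$ on the algebraic tensor product somewhat more explicitly than the paper, which simply asserts it.
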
 
\begin{proof}  
The space $L(\mu)$
in our setup 
is realized as
a subspace of holomorphic
polynomials
on $\mathfrak p^+$ taking values
in $F(\mu)$
with highest weight $\mu$.
The action of $\mathfrak g$ on general
$L(\mu)$ is given 
in \cite[(2.12), p.~35]{JV}. 
Suppose 
$F$
in the tensor product is $\fg$-highest
weight vector.
Then 
$(\pi_{-\frac 12\zeta
-\epsilon_r}
\otimes 
\pi_{-\frac 12\zeta})(X)F
=0
$ for $X\in \fp^++\fn_{\fk}$.
Using (\ref{k-p-act})
this implies that
$$
(\partial_X^{z}
+\partial_X^{w})F(z, w)=0, \ X\in \fp^+.
$$
It follows
that $F$ 
must of the form
$F(z, w)=f(z-w)$
for a
polynomial $f$ on $\fp^+$
with $f(z)=F(z-0)$
being an element
in $L(-\frac 12\zeta
-\epsilon_r)$.
If it is further
a highest weight 
vector 
with respect to
$\fn_\fk$, then for   
any $X\in \fn_{\fk}$,
$$
((\pi_{
\tau(\mu)
}
\otimes \pi_{\tau(-x\zeta)})
(X)F)(z, w)
$$
$$=
d\tau(\mu)
(X)f(z-w)
+d\tau(x\zeta)
(X)f(z-w)
-\delta([X, z])F(z, w)
-\delta([X, w])F(z, w)
$$
$$
=d\tau(\mu)
(X)f(z-w)
-\delta([X, z-w])f(z-w)=0,$$
since $\zeta$ is a
central character
and $\zeta(X)=0$.
Restricted to $w=0$
this is
$$
d\tau(X)f(z)
-\delta([X, z])f(z)=0.
$$
which means
 $f$ is a $\fk$-highest
 weight vector in 
$L(\mu)$, and
$F(z, w)
=f(z-w)$ as claimed.
\end{proof}

\begin{theorem} 
\label{tensor-min}
\begin{enumerate}
    \item
(\cite[Theorem 4.4]{PZ}.)
Let $L(-\frac a2\zeta)  $
be realized as above, 
as a subspace of
holomorphic polynomials on $D=G/K$.
We have
decomposition of the tensor product
\begin{equation}
  \label{ten-prod}
  L(-\frac a2\zeta) 
  \otimes 
  L(-\frac a2\zeta)
\simeq  \sum_{k=0}^\infty
L(-a\zeta -k\gamma_1)
\end{equation}
where 
$L(-a\zeta -k\gamma_1)
  $
is
realized as a subspace of polynomials $F(z, w)$
of $(z, w)\in D\times D$
  in   $L(-\frac a2\zeta)   \otimes 
  L(-\frac a2\zeta)$
  with degree of vanishing
   along the diagonal $z=w$
  in $D\times D$ being exactly $k$.
Up to nonzero constants, the
intertwining operator 
from 
$L(-\frac a2\zeta)   \otimes 
  L(-\frac a2\zeta)$
to  $ L(-a\zeta -k\gamma_1)$
is uniquely given by
\begin{equation}
\label{bi-lin-int}    J_{k} p(z)
=\sum_{i=0}^k
(-1)^i \binom{k}{i}
\frac{1}{
(\frac a2)_i
(\frac a2)_{k-i}
}(
\partial_z^i \otimes \partial_w^{k-i}) p(z, z),\quad 
p\in L(-\frac a2\zeta)\otimes
L(-\frac a2\zeta), 
\end{equation}
where $(\nu)_j=\nu(\nu+1)
\cdots(\nu+j-1)$ is the Pochammer symbol.
\item Let 
$\fg_0=\fsp(r, \mathbb R)$
with $\fp^+$ 
realized as 
$\fp^+=\{z=(z_{ij})\in M_{r, r}; z=z^t\}$
of symmetric matrices. We have
the following tensor product
decomposition
\begin{equation}
\label{tensor-product-var}
L(-\frac 12\zeta-\epsilon_r)
  \otimes 
L(-  
\frac12\zeta)  
  \simeq   \sum_{k=0}^\infty 
  L(-\zeta-(2k+1)\epsilon_r).
\end{equation}
The subspace $ L(-\zeta -(2k+1)\epsilon_r)$
is realized as
the subspace of 
holomorphic functions
$f(z, w)$ in 
$L(-\frac 12\zeta-\epsilon_r)
  \otimes 
L(-  
\frac12\zeta)  $
which are vanishing
of exactly degree $2k$ along
the diagonal $z=w$,
and the corresponding
intertwining operator
from 
$L(-\frac 12\zeta)   \otimes 
  L(-\frac 12\zeta-\epsilon_r)$
to  
$ L(-\zeta -(2k+1)\epsilon_r)$
is given by
\begin{equation}
\label{bi-lin-int-v}   
J_{k} p(z)
=\sum_{i=0}^k
C(k, i)
(\partial_z^i \otimes \partial_w^{k-i}) p(z, z)
\end{equation}
for constants
$C(k, i)$
with the leading
term $C(k, k)\ne 0$.

\end{enumerate}
\end{theorem}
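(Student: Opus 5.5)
Part (1) is cited from \cite[Theorem 4.4]{PZ}, so the work is in part (2); I would still organize both the same way, using the preceding Lemma. Since $\fg$-highest weight vectors in $L(\mu)\otimes L(x\zeta)$ have the form $F(z,w)=f(z-w)$ with $f$ a $\fk$-highest weight vector of $L(\mu)$, the highest weight vectors in the tensor product are determined by the $K$-types of one factor.

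For (2), apply the Lemma with $L(\mu)=L(-\frac12\zeta-\epsilon_r)$ and $L(x\zeta)=L(-\frac12\zeta)$. By the ladder decomposition recalled in Section~\ref{sect-23}, the $\fk$-highest weight vectors of $L(-\frac12\zeta-\epsilon_r)$ are one per $K$-type $F(-\frac12\zeta-(2k+1)\epsilon_r)$, $k\ge0$. Concretely, each is a homogeneous polynomial of degree $k$ in $z\in\fp^+$ with values in $F(-\frac12\zeta-\epsilon_r)\cong(\mathbb C^r)^*$, essentially $z\mapsto z_{rr}^k e_r^*$. The candidate $\fg$-highest weight vectors are therefore $F_k(z,w)=f_k(z-w)$. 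These vanish to order exactly $k$ along the diagonal, or order $2k$ if one counts in the paper's normalization with $\gamma_1=2\epsilon_r$. Their weight is $-\frac12\zeta-(2k+1)\epsilon_r-\frac12\zeta=-\zeta-(2k+1)\epsilon_r$, since the vector $f_k(z-w)$ carries the $\fk$-weight of $f_k$ plus the central character of the second factor.

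Conversely, each $F_k$ is annihilated by $\fp^+$ because $(\partial^z_X+\partial^w_X)f(z-w)=0$, and by $\fn_\fk$ by the computation in the Lemma's proof. Hence each generates a highest weight submodule of weight $-\zeta-(2k+1)\epsilon_r$. By unitarity of the tensor product, this submodule is irreducible, equal to $L(-\zeta-(2k+1)\epsilon_r)$. The tensor product is a unitary highest-weight-type module with finite-dimensional $K$-isotypic pieces, so it is the orthogonal direct sum of the submodules generated by its $\fg$-highest weight vectors. Since the Lemma gives exactly one such vector per $k$, we get (\ref{tensor-product-var}) with multiplicity one.

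For the intertwining operator, restrict a polynomial $p(z,w)$ to the diagonal after applying $\partial_z^i\otimes\partial_w^{k-i}$, contracted so that the result is $F(-\zeta-(2k+1)\epsilon_r)$-valued. Intertwining with $\fp^+$ is automatic for any such diagonal restriction. The $\fk$-equivariance comes from contracting with the $\fk$-equivariant projection onto the top component. The remaining condition, intertwining with $\fp^-$ acting by the second-order operators of \cite{JV}, gives a two-term recursion on the $C(k,i)$. The recursion should be solved as in the scalar case (\ref{bi-lin-int}), with Pochhammer factors $(\frac12)_i(\frac32)_{k-i}$ or similar, and with $C(k,k)\neq0$.

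The main obstacle is this last recursion: the $\fp^-$-action on vector-valued $L(-\frac12\zeta-\epsilon_r)$ has an extra term from $\tau_\lambda$. Alternatively, one can avoid explicit constants. Uniqueness of the intertwiner up to scalar follows from multiplicity one. Nonvanishing of $C(k,k)$ follows by evaluating on $F_k$: only the $i=k$ term survives the pairing with the top-degree part in $z$, since $J_k$ is nonzero on $F_k$.
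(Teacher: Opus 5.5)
Your argument for the decomposition (\ref{tensor-product-var}) is sound, and it is the natural way to fill in the paper's sketch. The paper only exhibits $(z_{rr}-w_{rr})^k e_r$ and defers the rest to \cite{JV, PZ}. You use the preceding Lemma together with the ladder $K$-types of $L(-\frac12\zeta-\epsilon_r)$ to pin down all $\fg$-highest weight vectors, and unitarity then gives the multiplicity-one sum.

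There are two smaller points in that part.
\begin{enumerate}
\item The vanishing order of $(z_{rr}-w_{rr})^k e_r$ is intrinsically $k$. The ``$2k$'' in the statement appears to be a slip: the order-$k$ operator $J_k$ and the paper's own vector both give order $k$. No normalization of $\gamma_1$ changes a vanishing order, so do not try to reconcile the two numbers.
\item You only show that the highest weight vector vanishes to order exactly $k$. To conclude this for all of $L(-\zeta-(2k+1)\epsilon_r)$, you need that the space $\mathcal I_k$ of elements vanishing to order $\ge k$ on the diagonal is $\fg$-stable. This holds because the first-order parts of the $\fp^+$, $\fk$ and $\fp^-$ actions are vector fields tangent to the diagonal, and the remaining parts are multiplication operators. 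Comparing highest weight vectors then gives $\mathcal I_k=\bigoplus_{j\ge k}L(-\zeta-(2j+1)\epsilon_r)$.
\end{enumerate}

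The genuine gap is the intertwining operator, and it has two parts.

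First, your argument for $C(k,k)\ne 0$ fails. Write $F_k=\sum_i\binom ki z_{rr}^i(-w_{rr})^{k-i}e_r$. Each monomial $z_{rr}^iw_{rr}^{k-i}$ is hit exactly by $\partial_z^i\otimes\partial_w^{k-i}$, so every term of $J_k$ contributes to $J_kF_k$. Hence $J_kF_k\ne 0$ only says that some signed combination of all the $C(k,i)$ is nonzero. Test instead on the component $p$ of $F_k$ that has degree $k$ in the variable differentiated by the $i=k$ term and degree $0$ in the other variable. Only the $i=k$ term acts on $p$. Moreover $J_kp\ne 0$: components of different bidegree are orthogonal, so $\langle p,F_k\rangle=\|p\|^2\ne0$. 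Thus $p$ has a nonzero component in the $k$-th summand, which is the only summand on which $J_k$ is nonzero.

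Second, you never show that an intertwiner of the stated bidifferential form exists. The $\fp^-$-recursion is left unsolved, and uniqueness via multiplicity one says nothing about the form. The form is in fact automatic, by the following steps.
\begin{enumerate}
\item Take any intertwiner $J$ onto the $k$-th summand; the projection exists by the decomposition.
\item $J$ commutes with the $\fp^+$-action, hence with simultaneous translations. So $(Jp)(a)=\ell\bigl(p(\cdot+a,\cdot+a)\bigr)$ with $\ell=\mathrm{ev}_0\circ J$.
\item The functional $\ell$ is $\fk$-equivariant. Comparing eigenvalues of the center of $\fk$ forces $\ell$ to vanish off total degree $k$.
\item Hence $\ell$ is a combination of $k$-th order derivatives at $(0,0)$. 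This is exactly the form $\sum_i C(k,i)(\partial_z^i\otimes\partial_w^{k-i})p(z,z)$.
\end{enumerate}
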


For the classical
Lie groups
$U(r, r+b)$, $ 
Sp(r, \mathbb R)$ and $
SO^*(2n)$, the
first statement  can also
be obtained
using dual pair
correspondences
\cite{KV, NOT}.
The second part
can be derived 
from the techniques
in \cite{JV, PZ}
and we sketch a proof.
\begin{proof}
(2) Let $\{e_1, \cdots, e_r\}$
be the standard
basis vectors in 
$\mathbb C^r$
of weights $-\epsilon_1, \cdots, -\epsilon_r$, for 
$\mathfrak k=
\mathfrak{gl}(r, \mathbb C)$ acting on $\mathbb C^r$ by the
dual action of the
defining representation.

It is easy
to check that
$F(z, w)=(z_{rr}-w_{rr})^k
e_{r}$ 
is in the tensor product and is
of highest
weight $
\lambda
=-\zeta -k\gamma_1-\epsilon_{r}$. Thus
$L(\lambda) $ appears
in the tensor product 
(\ref{tensor-product-var}).
The rest follows by the same
arguments
as in \cite{JV, PZ}
\end{proof}

\subsection{
$K$-types in the
tensor products
  $(  \mathcal L(-\frac a2\zeta)
\otimes
  \mathcal L(-\frac a2\zeta),
  \mathfrak g)$
and the corresponding
$K'$-types in
  $(  \mathcal L'(-\frac a2\zeta)
\otimes
  \mathcal L'(-\frac a2\zeta),
  {\mathfrak g'})$
  for a tube subdomain $D'=D\cap V_2$  }
  \label{Sub-sym-pair}

Let $D$ be the symmetric 
domain in $V$. Let $1<s\le r$,  fix the 
$s$ orthogonal roots $\gamma_1<\cdots < \gamma_s$
and the tripotents $e_1,
\cdots, e_s$. Consider
the Peirce decomposition  $V=
V_2 + V_1 + V_0$
with respect to $e'=e_1+\cdots +e_s$,
\begin{equation}
\label{V-2-1-0}
V_{2}=\sum_{1\le j\le k\le s} V_{jk},
\,
V_{1}=\sum_{1\le j\le s <k} V_{jk} +
\sum_{1\le j\le s } V_{j0}, 
\,
V_{0}=\sum_{s< j\le k \le r} V_{jk} + 
\sum_{s< j\le r} V_{j0};
\end{equation}
where $V_{jk}$ are defined in (\ref{Peirce}).
Let
$$
P_2: V\to V_2
$$
be the orthogonal projection.

Let $D'=V_2\cap D$ be the corresponding
 tube subdomain in $D$ 
 of rank $s$ 
 and $ \mathfrak g', \mathfrak k'$
etc be the corresponding complexified
 Lie algebras.
We shall need the following simple observation
on the extension of the highest weight vectors.
We shall need somewhat more
precise description
of the root vectors
in
Lemma 
  \ref{root-sps}
 that are
also in $\mathfrak k'$.

\begin{lemm+}
  \label{root-sps}
Fix $1<s\le r$ as above.  The compact positive root spaces 
of $\mathfrak k'$
    are given by
$$
\fk_{jk}=\fk_{jk}'
=\text{Span}\{D(u, \bar v);  \quad u\in V_{jl},  v\in V_{lk}, k\le l\le j\}, \ 
1\le k\le j \le s; 
$$
$$
\fk_{0j}=\fk_{0j}'
=\text{Span}\{D(u, \bar v); 
\quad u\in V_{0l}, v\in V_{lj}, 1\le l\le j \},\ 
1\le j \le s; 
$$
$$
\fn_{c}\cap \sum_{j=1}^s
\mathfrak k_{jj}
=\fn_{c}'\cap \sum_{j=1}^s\mathfrak k_{jj}'
\subset \sum_{j=1}^s\sum_{k=0}^s
\text{Span}\{D(u, \bar v); 
\quad u\in V_{jk}, v\in V_{kj}, 0\le k\le s \}. 
$$
 \end{lemm+}
\begin{proof}
Our claim can be checked
by a case by case computation. For 
classical domains this
follows directly
from the formulas
for the root system.
If  $(\mathfrak g, 
\mathfrak k)=(\mathfrak
e_6, \mathfrak{so}(10, \mathbb C) +\mathbb C)$, respectively  $(\mathfrak e_7,
\mathfrak e_6 +\mathbb C), 
$ then 
 $(\mathfrak g', 
\mathfrak k')=
(
\mathfrak{so}(10, \mathbb C),
\mathfrak{so}(8, \mathbb C) +\mathbb C)$, respectively 
$(\mathfrak{so}(12, \mathbb C), 
\mathfrak{so}(10, \mathbb C) +\mathbb C)$.
We shall specify these
subspaces and the above inclusions  concretely
in Sections \ref{e-6-case}
and \ref{e-7-case}.
\end{proof}

\begin{lemm+}
\label{reduct-subdom}
Fix $1< s\le r$ and let
$({\mathfrak g'},
{\mathfrak k'})
$ be as above.
\begin{enumerate}
\item
Let $F'(\mu), F'(\nu) \subset \mathcal P(V_2)$ be two
  components in the decomposition (\ref{poly-deco}) 
  for $V_2$
  with $\mu=-(m_1\gamma_1 +\cdots +m_s\gamma_s)$,
   $\nu=-(n_1\gamma_1 +\cdots +n_s\gamma_s)$. 
Consider the
irreducible 
decomposition of
  $
  F'(\mu) \otimes
    F'(\nu)
  $ and let
  $F'(\omega)$
  be an irreducible
  component.  
  Assume that $\omega$ is also dominant as a
  linear form
  on     $\mathfrak t$
  via its projection on
  $\mathfrak t'$.
  Then the  component $ F'(\omega)$ generates
  an irreducible component $ F(\omega)$ in
  the tensor product
  decomposition of $ F(\mu) \otimes
  F(\nu)$. Moreover, the corresponding highest
  weight vector of  $ F(\omega)$ is
  given by
  \begin{equation}
  \label{g-f}
 g(z, w)=f(P_2z, P_2w), \ \ (z, w)\in D\times D
  \end{equation}
  where $f$ is the  highest weight vector in
  $ F'(\omega)$
  and $P_2$ is the orthogonal projection $V\to V_2$.
\item Consider the 
 decomposition
(\ref{ten-prod}) for $({\mathfrak g'},
K')$. Let $f\in 
( L'(-\frac a2\zeta)   \otimes 
  L'(-\frac a2\zeta))_l
$ be a $K'$-highest
weight vector of weight $\omega$ considered
as a polynomial in $D'\times D'$. Assume as above that $\omega$ is also dominant
on $\mathfrak t$. 
Then the polynomial $g(z, w)$
defined in  (\ref{g-f})
is also a $K$-highest weight vector in 
$( L(-\frac a2\zeta)   \otimes 
  L(-\frac a2\zeta))_l
  = L(-a\zeta 
  -l\gamma_1)$
\end{enumerate}

\end{lemm+}
\begin{proof} (1)
First note that 
$F'(\mu)\subset F(\mu)$,
where any
element in the space
$F'(\mu)$ 
is viewed as polynomial
on $V$ via the projection $P_2: V\to V_2$,
since the  $K'$-highest 
weight vector in 
$F'(\mu)$  is precisely
the highest weight vector
in $F(\mu)$; see e.g.  \cite{FK, Schmid, Up-AJM}. 
In particular 
the element $g\in F(\mu)\otimes
F(\nu)$. 
Now $\omega$ is
$K$-dominant and thus there is an irreducible
representation of $
\mathfrak k$ with highest
weight $\omega$,
and we prove that $g$
is a highest weight vector of 
weight $\omega$
by computing
 the action $Xg$ of
positive root
vector $X\in \mathfrak k_{\alpha}$, 
$\alpha\in \Delta_c^+$.
Clearly, $g$ is of weight $\omega$ 
by our realization of $\mathfrak k'$. 
By definition we have
the tensor product action of $X
\in  
\mathfrak k$ on $f\in  F'(\mu)
\otimes
 F'(\nu)
$
as polynomials of two variables
on $V_2$ is
  \begin{equation}
    \label{hwv-f}
    (X f)(z, w)=\partial_{Xz}^{(1)} f(z, w)+
    \partial_{Xw} ^{(2)}f(z, w),\quad z, w\in V_2.
      \end{equation}
Similarly, the action of $X\in \mathfrak k$ 
on  $g\in \mathcal F(\mu)
\otimes \mathcal F(\nu)$ is
$$
(X g)(z, w)=\partial_{P_2Xz}^{(1)} f(P_2z, P_2w)+
\partial_{P_2Xw}^{(2)} f(P_2z, P_2w).
$$
We prove that $Xg=0$ for all positive root vectors $X$. Notice that 
\[
\partial^{(1)}_v g(z, w)=
\partial^{(1)}_v f(z, w)=
\partial^{(2)}_vg(z, w)=
\partial^{(2)}_vf(z, w)=0
\] 
for any $v\in V_1+V_0$ by definition.

Let
$X\in \fk_{jk},
r\ge  j> k\ge 1$.
If $j\le s$ then $\fk_{jk}=
\fk_{jk}',
$ 
and 
is spanned by the root vectors $D(v_{jl}, \bar{v_{lk}})$, 
for $1\le l\le s$
by Lemma  \ref{root-sps}. 
In the above formula for $Xg$,
we have
$P_2Xz = P_2 X(P_2z)$, for any  $z\in V=V_2+V_1+V_0$,
and $P_x Xz=Xz$ for $z\in V_2$ since
$X\in \fk_{jk}, s\ge j\ge k\ge 1$
by using  (\ref{V-2-1-0})
 and 
Peirce product rule 
(\ref{Peire-prod}).
Thus
\begin{equation*}
\begin{split}
(X g)(z, w)
&=\partial_{P_2XP_2z}^{(1)} f(P_2z, P_2w)+
\partial_{P_2XP_2w}^{(2)} f(P_2z, P_2w)\\
&=\partial_{Xz'}^{(1)} f(z', w')+
\partial_{Xw'}^{(2)} f(z', w')=0, \,\,  
\end{split}
\end{equation*}
where $z'=P_2z, 
w'=P_2w$,  
since $f$ is a highest
weight vector.
If
$j>s$ then 
$P_2Xz=0,
$
again by the Peirce rule, and we also get  $Xg=0$.

Let
$X\in \fk_{0j}, 1\le j\le r$.
If $j\le s$, 
then
$X\in \fk_{0j}'$
is a linear combination of 
\[
D(u, v),\ u\in V_{0k}, v\in V_{kj},\quad 1\le k\le j\le s,
\]
again by Lemma \ref{root-sps}.
We have $Xg=0$
by exactly the same argument as above.
If $j>s$, we
can take $X=D(u, \bar v) $ for $u\in V_{0k}$, and $Dz=0$ unless $z\in V_{jl}$ for some $l$, but then
$P_2 Xz=P_2D(u, v)z=0$ and $Xg=0$.

Let $X$ be a root vector
in the space $\mathfrak k_{jj}$,
$1\le j\le r$.
If $j\le s$, we have
$\mathfrak k_{jj}
=\mathfrak k_{jj}'$ and again $Xg=0$.
If $j>s$ we have $P_2 Xz=0$ by the Peirce relation and then $Xg=0$.

(2) We have that 
 the $K'$-module 
 \[
 L'(-\frac a2\zeta)
 =\sum_{m=0}^\infty 
 F'(-\frac a2\zeta)
 \otimes
 F'(-m\gamma_1),
 \]
viewed as
a space of polynomials
on $V$ via the projection 
$P_2: V\to V_2$,
is  in 
$L(-\frac a2\zeta )$,
by (1), where $\zeta$
on $\mathfrak k$
is also viewed
as a central character for
$\mathfrak k'$ with the central 
element $Z\in  \mathfrak k$
being viewed as in 
$\mathfrak k'$
(acting on $V_2$ by $i$).
Thus the 
polynomial $g$
above is in 
$
L(-\frac a2\zeta)
\otimes L(-\frac a2
\zeta)
$. 
Since $f$ vanishes
of degree exactly $l$
along the diagonal $z=w$
on $D'\times D'$, the
function $g$
has the same vanishing property and thus $g\in
(L(-\frac a2\zeta)
\otimes L(-\frac a2(j-1)
\zeta))_l$. Finally, the fact that 
$g$ is a highest weight
vector follows from the same
computation as in (1) above, now $g$ being
in the sum of tensor products instead of
one tensor product.
\end{proof}

\subsection{Littlewood-Richardson rule}
We shall
need the Littlewood-Richardson rule
for the decomposition of the tensor product
of {polynomial} representations of $\mathfrak{gl}(n, \mathbb C)$; see e.g. \cite[Chapter 9]{GW}. 
Denote by $F(\alpha)$
the representation of 
$\mathfrak{gl}(n, \mathbb C)$
with highest weight
$\alpha=(\alpha_1,
\cdots, \alpha_n)$,
$\alpha_1\ge \cdots\ge\alpha_n\ge 0$,
corresponding to a Young diagram with $\alpha_1$ boxes in the first row, $\alpha_2$ boxes in the second row, etc.
The multiplicity $m(F(\gamma),
F(\alpha)\otimes F(\beta))$,
with $\alpha\preceq\gamma$, i.e.,
$\alpha_j\le \gamma_j\ \forall j$,
is given by the Littlewood-Richardson coefficient $LR_{\alpha, \beta}^\gamma$
defined as the number of 
semistandard skew
tableaux of shape $\gamma-\alpha$
whose weights are given 
by the fixed $\beta=(\beta_1, \beta_2, \cdots)$. More
precisely, the 
skew tableau $\gamma-\alpha$
is obtained from the
Young diagram of $\gamma$
with the diagram of
$\alpha$ being removed, and 
a semistandard skew
tableau of weight
$\beta=(\beta_1, \beta_2, \cdots)$ is a filling of 
 $\beta_1$ boxes of $1$'s
 $\beta_2$ boxes of $2$'s etc,
 inserted in the
tableau $\gamma-\alpha$, 
such that
\begin{itemize}
\label{LR-R}
\item  each row
is weakly increasing
from left to the right;
\item 
each column is strictly increasing
from top to the bottom;
\item
the word obtained from top to bottom,
right to left, is a Yamanouchi word, namely at each fixed position $j$ in the word, 
the number of any $i$ appearing
before the position 
$j$ 
is not smaller than
the number of $i+1$ appearing 
before the position $j$.
\end{itemize}

\begin{lemma}
\label{lem LR2}
We have the following 
for representations of $U(n)$, $n\ge 6$:
\begin{enumerate}     
\item The following formula holds,
\begin{equation*}
\begin{split}
&\quad F(\nu_1,\nu_1,\nu_3,\nu_3,0,\dots,0)\otimes F(k,k,0,\dots,0)
\\ 
&=
\bigoplus F(\nu_1+l_1,\nu_1+l_2,\nu_3+l_3,\nu_3+l_4,l_5,l_6,0,\dots,0),
\end{split}
\end{equation*}
with each summand having multiplicity 1, and with the sum running over integers $l_1,l_2,l_3,l_4,l_5,l_6$ such that 
\begin{eqnarray}
\label{cond 1 LR2}
&& \qquad 0\leq l_4\leq l_3\leq \nu_1-\nu_3;\qquad 0\leq l_6\leq l_5\leq\nu_3;\\
&&\nonumber 
l_1+l_4+l_6\leq k;\qquad l_2+l_4+l_5\leq k\le l_1+l_4+l_5;\\
&&\nonumber 
l_1+l_2+l_3+l_4+l_5+l_6=2k.
\end{eqnarray}
\item The module $F=F(a_1,a_2,a_3,a_4,a_5,a_6,0,\dots,0)$, $a_1\geq \dots\geq a_6\geq 0$, appears in
\begin{equation}
\label{sum-v-fixed-k}
\left(\bigoplus_{\nu_1\geq\nu_3\geq 0} F(\nu_1,\nu_1,\nu_3,\nu_3,0,\dots,0)\right)\otimes F(k,k,0,\dots,0)
   \end{equation}
with nonzero multiplicity if and only if the following conditions hold. Denote by $t_1=a_1-a_2$, $t_2=a_3-a_4$, $t_3=a_5-a_6$ the tails of $F$. Then $t_1+t_2+t_3=2d$ must be even, i.e., $d=\frac 12 (a_1+a_3+a_5-a_2-a_4-a_6) $ must be an integer; clearly, $d\leq k$. 
Furthermore, we must have
\begin{eqnarray}
\label{cond a LR2}
&&t_1\leq d;\qquad t_3\leq d;\qquad t_1+t_3\geq d; \qquad a_6\leq k-d;\\
&& \nonumber k-d-a_6\leq (a_2-a_3)+(a_4-a_5).
\end{eqnarray}
Suppose these conditions are satisfied, and denote $u=a_2-a_3$, $v=a_4-a_5$, $w=k-d-a_6$. Then the multiplicity of $F$ equals the number of paris
$(r,s)$ satisfying
\begin{eqnarray}\label{rs}
&&0\leq r\leq u;\\
&&\nonumber 0\leq s\leq v;\\
&&\nonumber r+s=w.
\end{eqnarray}
This number can be described explicitly as 
\[
\min(w,u,v,u+v-w)+1
=\min(k-d-a_6, a_2-a_3, a_4-a_5, a_2+a_4+a_6-(a_3+a_5)-(k-d)) +1
\]
\item When $\nu_3=0$ and $n\ge 4$ we have
\begin{eqnarray*}
F(\nu_1,\nu_1,0,\dots,0)\otimes F(k,k,0,\dots,0)=
\bigoplus F(\nu_1+l_1,\nu_1+l_2,l_3,l_4,0,\dots,0),
\end{eqnarray*}
with each summand having multiplicity 1, and with the sum running over $l_1,l_2,l_3,l_4$ such that 
\begin{eqnarray}\label{cond l LR1}
&&l_3\geq l_4\geq 0;\qquad l_3\leq\min(\nu_1,k);\\
&&\nonumber l_2=k-l_3;\qquad l_1=k-l_4.
\end{eqnarray}
 The multiplicity of $F=F(a_1,a_2,a_3,a_4,0,\dots,0)$, $a_1\geq a_2\geq a_3\geq a_4\geq 0$, in
\[
\left(\bigoplus_{\nu_1\ge 0} F(\nu_1,\nu_1,0,\dots,0)
\right)
\otimes 
F(k,k,0,\dots,0)
\]
and
respectively
in 
\[
\left(\bigoplus_{\nu_1\ge 0} F(\nu_1,\nu_1,0,\dots,0)
\right)
\otimes 
\left(\bigoplus_{k\ge 0} 
F(k,k,0,\dots,0)
\right)
\]
is 
\begin{equation}
\label{cond a LR1}
\begin{cases} 1, &    a_1-a_2=a_3-a_4,\ a_3\leq k\leq a_2,\\
0, &\text{otherwise}
\end{cases}
\end{equation}
and respectively
\begin{equation}
\label{cond a LR1-var}
\begin{cases} a_2-a_3+1, &    a_1-a_2=a_3-a_4, \\
0, & \text{otherwise}.
\end{cases}
\end{equation}
\end{enumerate}
\end{lemma}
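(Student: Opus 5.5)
We will apply the Littlewood--Richardson rule directly. Two facts organize the count. First, $F(k,k,0,\dots,0)$ has a rectangular shape with only two rows, so every skew tableau that contributes is filled with $k$ ones and $k$ twos. Second, since two equal entries cannot share a column, each column of the skew shape $\gamma-\alpha$ contains at most two boxes, with a $1$ placed above a $2$. Here $\alpha=(\nu_1,\nu_1,\nu_3,\nu_3,0,\dots)$ and $\gamma=(\nu_1+l_1,\nu_1+l_2,\nu_3+l_3,\nu_3+l_4,l_5,l_6,0,\dots)$.

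For (1) we first locate the boxes. The skew shape has added boxes only in rows $1$ through $6$; rows $7$ onwards are excluded because a column of $\gamma-\alpha$ holds at most two boxes. Row $1$ holds only $1$'s. Row $2$ holds $1$'s followed by $2$'s, and any box directly below a row-$1$ box must contain a $2$. Rows $3,4$ lie under columns of $\alpha$ that end in row $2$ (or row $4$), and rows $5,6$ lie under columns of height $\nu_3$ in rows $3,4$.

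We then write down every semistandard Yamanouchi filling. The Yamanouchi condition read right to left, top to bottom, fixes the filling once $\gamma$ is given, so each coefficient is $0$ or $1$. The interlacing inequalities $l_4\le l_3\le\nu_1-\nu_3$ and $l_6\le l_5\le\nu_3$ come from $\gamma$ being a partition together with the two-box column constraint. The condition $l_1+l_4+l_6\le k$ counts the $1$'s that are forced, and the inequalities $l_2+l_4+l_5\le k\le l_1+l_4+l_5$ encode the lattice-word condition at the end of row $2$ and row $4$. Total size gives $\sum l_i=2k$. The main obstacle is this bookkeeping: deciding exactly which boxes in rows $3$ through $6$ must be $1$ and which must be $2$, and checking that the Yamanouchi inequality reduces to precisely the listed linear conditions and no others. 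We will handle it by tracking the running counts $\#1-\#2$ row by row and verifying uniqueness.

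For (2) we fix $a=(a_1,\dots,a_6)$ and count pairs $(\nu_1,\nu_3)$ together with $l$'s satisfying (1) that produce $a$. Pairing rows gives $t_1=l_1-l_2$, $t_2=l_3-l_4$ and $t_3=l_5-l_6$, and $\sum l_i=2k$ forces $t_1+t_2+t_3=2d$ to be even. Eliminating $\nu_1,\nu_3$ leaves two free parameters, which we take to be $r=l_4$-type and $s=l_6$-type shifts. Substituting into (\ref{cond 1 LR2}) turns the system into the inequalities (\ref{cond a LR2}) plus $0\le r\le u$, $0\le s\le v$ and $r+s=w$. Counting lattice points on this segment gives $\min(w,u,v,u+v-w)+1$.

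For (3) we specialize to $\nu_3=0$, which forces $l_5=l_6=0$. Then (1) gives $l_3=k-l_2$ and $l_4=k-l_1$, the second of these because $l_1+l_4\le k\le l_1+l_4$. Setting $a_1-a_2=l_1-l_2=l_3-l_4=a_3-a_4$ shows that $\nu_1$ is determined and that $a_3\le k\le a_2$, which gives (\ref{cond a LR1}). Summing over $k$ then gives $a_2-a_3+1$.
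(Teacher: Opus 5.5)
Your plan for (1) and (3) is the paper's: a direct Littlewood--Richardson analysis of fillings by $k$ ones and $k$ twos, followed by specialization to $\nu_3=0$. For (2) you take a different route. Instead of the paper's direct count of tails and vertical $1/2$ dominoes in the skew diagram, you substitute $\gamma=a$ into the parametrization of (1). That reduction works and saves redoing the tableau analysis, but your choice of free parameters is wrong. Once $a$ is fixed, $l_5=a_5$ and $l_6=a_6$ are determined, so $l_6$ cannot be free; for example, for $a=(1,1,0,\dots,0)$ and $k=1$ one has $w=1$ but $l_4=l_6=0$.

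The correct free parameters are $r=l_2=a_2-\nu_1$ and $s=l_4=a_4-\nu_3$. These are the numbers of dominoes in rows $1$--$2$ and $3$--$4$, exactly the paper's $r,s$. Then $l_1=r+t_1$, $l_3=s+t_2$, and $\sum l_i=2k$ becomes $r+s=w$. The inequalities of (1) translate as follows:
\begin{itemize}
\item $l_1+l_4+l_6\le k\Leftrightarrow t_1\le d$;
\item $l_2+l_4+l_5\le k\Leftrightarrow t_3\le d$;
\item $k\le l_1+l_4+l_5\Leftrightarrow t_1+t_3\ge d$;
\item $l_3\le\nu_1-\nu_3\Leftrightarrow r\le u$;
\item $l_5\le\nu_3\Leftrightarrow s\le v$;
\item $l_4\ge 0\Leftrightarrow s\ge 0$.
\end{itemize}
The remaining condition $r\ge 0$ is $l_2\ge 0$, and this is \emph{not} implied by the displayed inequalities of (1), despite the remark after the lemma. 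For instance, $\nu_1=5$, $\nu_3=0$, $k=1$, $l=(1,-1,2,0,0,0)$ satisfies all of them, yet $F(6,4,2,0,\dots,0)$ does not occur in $F(5,5,0,\dots,0)\otimes F(1,1,0,\dots,0)$. So you must carry $l_2\ge 0$ (i.e.\ $\alpha\subseteq\gamma$) explicitly, or your count overcounts: for $a=(3,2,2,1,0,0)$, $k=1$, the spurious pair $(r,s)=(-1,1)$ would be included. The paper's domino count gets $r,s\ge 0$ for free. You also use $l_2\ge 0$ tacitly in (3) to obtain $a_3\le k$.

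For (1), the multiplicity-one claim is asserted rather than proved, and it is the only non-routine point. Column strictness, plus the fact that the reading word starts with $1$, forces the following: row $1$ consists of $1$'s; rows $2,4,6$ consist of $2$'s; and there are $1$'s above rows $4$ and $6$. Note that row $2$ lies entirely under row $1$ because $\alpha_1=\alpha_2$, so it is not ``$1$'s followed by $2$'s''. These constraints still leave the remaining $k-l_1-l_4-l_6$ ones free to be split between the rest of row $3$ and the part of row $5$ not above row $6$, which would give multiplicities larger than one.

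The missing step is this. Suppose row $5$ contains $l_6+y$ ones. Right after the twos of row $5$ have been read, only these $l_6+y$ ones and the $l_6$ twos of row $6$ remain. So $\#1-\#2=-y$ at that moment, and the lattice condition forces $y=0$. Counting then puts exactly $k-l_1-l_6$ ones and $k-l_2-l_4-l_5$ twos in row $3$, which gives $l_1+l_4+l_6\le k$ and $l_2+l_4+l_5\le k$. The only other non-automatic lattice check comes after the twos of row $3$: it is $l_2+(k-l_2-l_4-l_5)\le l_1$, i.e.\ $k\le l_1+l_4+l_5$. This is precisely the row-by-row filling the paper writes down. With this step and the correction in (2), your argument is complete.
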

Notice that
the obvious condition $0\le l_2\le l_1$ is
a consequence of
the inequalities in (\ref{cond 1 LR2}).

It is possible to extend the above statements to include also the cases $n<6$. We leave the details to the reader.

\begin{proof} (1)
As usual, to satisfy the Yamanouchi and the semistandard conditions, the first line of the skew tableau must consist of $l_1$ ones, the second line must consist of $l_2$ twos, the sixth line must consist of $l_6$ twos, and the fifth line must consist of $l_6$ ones followed by $l_5-l_6$ twos. Furthermore, the fourth line must consist of $l_4$ twos, and then the third line will consist of $l_3'=k-l_1-l_6$ ones and $l_3''=k-l_2-l_4-l_5$ twos.

Since $l_3'\geq l_4$ (we must have ones above twos), it follows that $l_1+l_4+l_6\leq k$. Since the total number of twos is $k$, we see that $l_2+l_4+l_5\leq k$.
Finally, the nontrivial place to check the Yamanouchi condition is the first occurrence of two from the left in the third row; we must have $l_3''+l_2\leq l_1$, which is equivalent to $l_1+l_4+l_5\geq k$. The other conditions are obviously necessary. 

It also follows that if all the conditions are met, then there is a unique way to place ones and twos, so the corresponding module appears in the tensor product with multiplicity one. 

(2) It is clear that the tails $t_1=a_1-a_2, t_2=a_3-a_4, t_3=a_5-a_6$ must belong to the skew diagram in the tensor product decomposition of
(\ref{sum-v-fixed-k}), and that in total they must contain an equal number of ones and twos; denote this number by $d$, so that $t_1+t_2+t_3=2d$. 

We know that $t_1$ must consist of ones and $t_3$ must consist of twos; let $t_2$ consist of $t_2'=d-t_1$ ones and $t_2''=d-t_3$ twos. Since the blocks \ $\tiny{\begin{ytableau} 1 \\ 2\end{ytableau}}$\  enter into the Yamanouchi condition trivially, the Yamanouchi condition must hold for the tails, which implies $t_2''\leq t_1$, or equivalently, $t_1+t_3\geq d$.

It is clear that the last two rows of the skew diagram must consist of $a_6$ blocks \ $\tiny{\begin{ytableau} 1 \\ 2\end{ytableau}}$. The rest of the skew diagram consist of $k-d-a_6=w$ blocks \ $\tiny{\begin{ytableau} 1 \\ 2\end{ytableau}}$, and they can be placed into the first two lines, or the third and the fourth lines. The number $r$ 
of the blocks in the first two lines is at most $a_2-a_3=u$, and the number  $s$ of the blocks in the third and fourth lines is
at most $a_4-a_5=v$, which proves the last condition of 
(\ref{cond a LR2}). We 
also see that the multiplicity is indeed the number of pairs $(r,s)$ satisfying \eqref{rs}.

To see that this number is indeed equal to $\min(w,u,v,u+v-w)+1$, one can proceed geometrically: draw the rectangle $[0,u]\times [0,v]$ in the coordinate plane, and find the number of integer points in this rectangle that are also on the line $r+s=w$. Clearly, there are no such points unless $0\leq w\leq u+v$. Now one treats separately the following cases:
\begin{enumerate}
\item[(a)] $0\leq w\leq u,v$. The number of points is $w+1=\min(w,u,v,u+v-w)+1$.

\item[(b)] $v<w\leq u$. The number of points is $v+1=\min(w,u,v,u+v-w)+1$.

\item[(c)] $u<w\leq v$. The number of points is $u+1=\min(w,u,v,u+v-w)+1.$

\item[(d)] $u,v<w\leq u+v$. The number of points is $u+v-w+1=\min(w,u,v,u+v-w)+1.$

\end{enumerate}

(3) This is a consequence of (1) and (2).
\end{proof}

We are going to apply the above cases of the Littlewood-Richardson rule to describe the $K$-types of 
$L(-\frac{a}{2}\zeta)\otimes L(-\frac{a}{2}\zeta)$. For this we have to pass 
from rational $K$-types to polynomial $K$-types, i.e., the $K$-types $F(\mu_1,\dots,\mu_n)$ with $\mu_1\geq\dots\geq\mu_n\geq 0$ as polynomial 
representations
of $U(n)$ and $GL(n, \mathbb C)$. To use the above results, we need a large number of zeros at the end of the highest weights; to ensure this, in some of the cases we will use the following easy lemma.

\begin{lemm+} 
\label{dual tensor}
Let $F(\alpha)$, $F(\beta)$ and $F(\gamma)$ be finite-dimensional representations of $U(n)$. Then the multiplicity of $F(\gamma)$ in $F(\alpha)\otimes F(\beta)$ is the same as the multiplicity of $F(\gamma)^*$ in $F(\alpha)^*\otimes F(\beta)^*$.
\end{lemm+}

\begin{proof}
By complete reducibility of finite-dimensional 
$U(n)$-modules, the multiplicity of $F(\gamma)$ in $F(\alpha)\otimes F(\beta)$ is equal to
\begin{multline*}
\dim\Hom_{U(n)}(F(\gamma),F(\alpha)\otimes F(\beta))=\dim\Hom_{U(n)}((F(\alpha)\otimes F(\beta))^*,F(\gamma)^*)=\\
\dim\Hom_{U(n)}(F(\gamma)^*,(F(\alpha)\otimes F(\beta))^*)=\dim\Hom_{U(n)}(F(\gamma)^*,F(\alpha)^*\otimes F(\beta)^*).
\end{multline*}
The last expression is equal to the multiplicity of $F(\gamma)^*$ in $F(\alpha)^*\otimes F(\beta)^*$.
\end{proof}

\section{The cases $(G,K)=(U(r, r+b), U(r)\times U(r+b)),
(Sp(r, \mathbb R), U(r))$}
\label{sect-su}
Theorem
\ref{thm1.1} is somewhat easier to prove
for these two 
cases.
\smallskip

{\it Proof of Theorem
\ref{thm1.1} for $(G,K)=(U(r, r+b), U(r)\times U(r+b))$.}
Denote 
$ \fg=\fgl(n, C)$, $n=2r+b$, which has one dimensional center.
 We use the abbreviations $\fg=\fgl(n, \mathbb C)
=
\fgl(n)
$,
$\fk=
\fgl(r)+ 
\fgl(r+b) 
$ with Cartan
subalgebra $\mathbb C^n=
\mathbb C^r +\mathbb C^{r+b}$
realized as diagonal matrices.
The relevant roots and linear
forms are
 $$\Delta^+=\{\e_i-\e_{j}; 1\le i<j\le  n\},\quad  
\Delta^+_n=\{\e_i-\e_{j}; 1\le i\le r, r+1\le j\le n\},$$
$$
\beta=\e_1-\e_n,\quad 
\gamma_i=\e_{r+b-i+1}
-\e_{r+b+i}, \quad i=1, 
\cdots, r,$$
$$
\rho=\frac 12
(n-1, \cdots, -(n-1)),\quad  
\zeta=\frac{1}{n} (
(r+b) 1^{r}  - r (1^{n}-1^{r})).
$$
Here we  write a linear form $\sum_{j=1}^n c_j\e_j$ 
on $\mathbb C^n$
as
$(c_1, \cdots,c_r|c_{r+1},  \cdots, c_n)$. 
The representations
of $\fk
$ are of the
form $F(\mu)\boxtimes F(\nu)$
with highest weight $(\mu_1,
\cdots, \mu_r|\nu_1, \cdots, \nu_{r+b})
$.
We use also the short-hand notation $1^j=(1, \cdots, 1, 0,\cdots, 0)
= \sum_{i=1}^j\e_i$ and 
$\mu^{-\ast}
=(-\mu_r, 
\cdots, -\mu_1)$
for any 
$\mu
=(\mu_1, 
\cdots, \mu_r)$.

The module
$L(\lambda)$
in question is 
$$
L(\lambda)=
L(-k\gamma_{1} 
 -2\zeta).
$$
This representation
is described
in \cite[Theorem 6.3]{KV}
(with their  $i=1$
and $k=2$) by the 
dual pair correspondence.
This is the last
reduction 
point in the family
of  $\lambda =-k\gamma_1+x\zeta$
by \cite{EHW}.
We now find the $K$-types.
The $K$-types
and their multiplicities 
can be obtained
by considering another
dual pair
$(
U(r)\times
U(r+b)
)\times 
(U(2)\times U(2))$
as one see-saw pair together
with the pair
 $U(r, r+b)\times U(2)$-correspondence.
 The central element
$cI_n =\diag
(cI_r, cI_{r+b})
\in U(r, r+b)$
acts on
$L(\lambda)$ as scalar 
$\det (cI_r)^{-2}
=c^{-2r}$; see \cite[Proposition 4.1]{KV}.
The two see-saw dual pairs
are
$$
\begin{matrix}
U(r, r+b) &\longleftrightarrow &U(2)\\
\bigcup & {} &\bigcap \\
U(r)
\times 
U( r+b) 
& \longleftrightarrow &U(2)\times U(2),
\end{matrix}.
$$
The dual pair correspondence
for  top pair is 
given in \cite{KV}
as
$$
L(\lambda )
\leftrightarrow (k, -k),
$$
and for the bottom pair
$(U(r)
\times
U(r+b)
)
\leftrightarrow
(U(2)\times U(2))$
is
a classical
result
\cite{NOT},
$$
F(0, \cdots, 0, -\mu_2, -\mu_1)\boxtimes F(
\nu_1, \nu_2, 0,\cdots, 0)
\otimes F(-2\zeta)_{\fk}
\leftrightarrow
F(\mu_1, \mu_2
)\boxtimes
F(-\nu_2,- \nu_1).
$$
From this we find
that any $K$-type
appearing in $L(\lambda)$
will be of the form
\begin{equation}
    \label{mu-nu-l-r}
(F(0, \cdots, 0, -\mu_2, -\mu_1
)_{\fgl(r)
}
\boxtimes 
F(\nu_1, \nu_2, 0\cdots, 0)
_{\fgl(r+b)})
\otimes F(-2\zeta)_{\fk},
\end{equation}
and it appears with equal multiplicity as
$F(k, -k)_{\fgl(2)}
$ in 
\begin{equation}
\label{gl2-tensor}
F(\mu_1, \mu_2)_{\fgl(2)}
\otimes
F(-\nu_2, -\nu_1)_{\fgl(2)},
\end{equation}
which is always one or zero.
It is one  exactly when
$$
(\mu_1-\mu_2) 
+(\nu_1-\nu_2) 
\ge 2k \ge
|(\mu_1-\mu_2)-(\nu_1-\nu_2)|,
$$
and
$$
-(\mu_1 +\mu_2)+
(\nu_1 +\nu_2)=0;
$$
there is an extra
general condition 
$(\mu_1-\mu_2) 
+(\nu_1-\nu_2)-2k =0\pmod 2$
which is automatic.
In other words,
the $K$-types in $L(\lambda)$
are precisely
those given in  (\ref{mu-nu-l-r}) where $(\mu_1, \mu_2)$
and 
$(\nu_1, \nu_2)$
satisfy the above conditions, and each
appears with multiplicity one; in particular
$L(\lambda)$ is of $K$-multiplicity free. 
\qed
\smallskip

Now consider
the  case $(Sp(r, \mathbb R), U(r))$.
We fix the relevant roots and the central character as follows:
 $$
 \Delta^+=\{\e_i\pm \e_{j}, 2\e_i; 1\le i<j\le  r\}, \quad 
 \Delta^+_n=\{\e_i+\e_{j}; 1\le i\le j\le  r\},  
 $$
$$
\gamma_j=2\e_{r-j+1},\, 1\le j\le r;\quad  \beta=\gamma_r=2\e_1,
$$
$$
\rho =(r, r-1, \cdots, 1), \quad
\zeta=1^r=(1, 1, \dots, 1).
$$
\smallskip

{\it Proof of Theorem
\ref{thm1.1} for $(G,K)=(Sp(r, \mathbb R), U(r))$.} 
We use
the dual pair
correspondence. It requires
somewhat more 
advanced results.
Let 
\[
\lambda=-k\gamma_1+x \zeta 
=-2k\e_r +x\zeta = (x, ..., x, x-2k)
\]
where $k> 0$
is an integer or a half integer, i.e.
$2k$ is an integer.
To find the K-types 
and their multiplicities
in $L(\lambda)$
we use
the see-saw dual pairs
(see e.g. \cite{NOT})
$$
\begin{matrix}
Sp(r, \mathbb R) &\longleftrightarrow &O(2)\\
\bigcup & {} &\bigcap \\
U(r) 
& \longleftrightarrow &U(2)
\end{matrix}
$$
the two
horizontal
pairs of groups
$(G, G')$
being
dual pairs in $Sp(2r, \mathbb R)$.
The restrictions
of the metaplectic
representation 
of $Sp(2r, \mathbb R)$
to the two 
pairs $(G, G')$
are multiplicity
free. Each of these restrictions 
is a direct sum 
of representations
of the form
$(\pi, \pi')$,
and there is 
a one-to-one correspondence $\pi\leftrightarrow
\pi'$.
The correspondence for the
upper row is $$
L(-\zeta -2k\epsilon_r)\leftrightarrow  2k,$$ 
where $2k$ denotes
the direct sum of
the two one-dimensional
representations
of $SO(2)$
with weights $\pm 2k$,
and
the correspondence for the lower row
is 
$$
F(-\zeta)
\otimes F(
-m_1\epsilon_r -m_2\epsilon_{r-1})
\leftrightarrow (m_1, m_2);$$ see \cite{KV, NOT}. 
From this we see that 
$F(-\zeta)
\otimes
F(-m_1\epsilon_r -m_2\epsilon_{r-1})$
appears in $L(-\zeta-2k\epsilon_r)$
if and only
if
$$
2k\le m_1-m_2,\quad 
2k=m_1-m_2 \pmod 2,
$$
in which case it has multiplicity one. We
write 
the K-types
using the Harish-Chandra root $\gamma_1=2\epsilon_r$ and 
the non-compact
root
$\delta=\epsilon_{r-1}+\epsilon_r
=\frac12(
\gamma_1 +\gamma_2)
$ 
as,
writing 
$m_1=2k+m_2+2l$, $m_2\ge 0 $, $l\ge 0$ integer
\begin{equation}
    \begin{split}
-\zeta-(2k+m_2+2l)\epsilon_r
-m_2\epsilon_{r-1}
&=
-\zeta -2k\epsilon_r
-2l\epsilon_{r}
-m_2(\epsilon_r+\epsilon_{r-1})
\\
&=
\lambda
-l\gamma_{1}
-m_2\delta,
    \end{split}
\end{equation}
which can be  rewritten
as 
$\lambda
-m_1\gamma_{1}
-
\frac {m_2}2(\gamma_1+\gamma_2).$
This proves our claim.
\qed
\smallskip

We give now another elementary
proof of the above statement
using the tensor product
decompositions
(\ref{ten-prod})
and (\ref{tensor-product-var}).
\smallskip

{\it Second proof of Theorem
\ref{thm1.1} for $(G,K)=(Sp(r, \mathbb R), U(r))$.}  
Let $k>0$ be an integer. 
We shall find the K-types in
$L(\lambda)$, 
$\lambda=-\zeta-2k\epsilon_r
$,  using 
Theorem
\ref{tensor-min} (1) which is
somewhat more elementary
than the theory of dual pairs. 
Consider first the case $k=1, \lambda=-\zeta -2\epsilon_r$.

Recall that
$L(-\frac 12\zeta)
=\sum_{m=0}^\infty
F(-\frac 12\zeta
-m\gamma_1)$. Note also $\gamma_1=2\epsilon_r,\ \gamma_2=2\epsilon_{r-1}$, and
\[
\lambda -
m_1\gamma_1 
-\frac{m_2}2(\gamma_1+\gamma_2)
= -\zeta -
(2+2m_1+m_{2})\epsilon_r
-m_{2}\epsilon_{r-1}.
\]
Consider the tensor product
\begin{equation}
    \label{F-tensor-F}
F(-\frac 12 \zeta
-2(m_1+l +1)\epsilon_r)
\otimes
F(-\frac 12 \zeta
-2l\epsilon_r)
\subset 
L(-\frac 12\zeta
)
\otimes
L(-\frac 12\zeta
)
\end{equation} 
We claim that the LHS
contains $$
F(-\zeta
-2(m_1+l +1)\epsilon_r
-2l 
\epsilon_{r-1}
).$$ Canceling the central characters, this is equivalent to 
\[
F(-2(1+m_1+l)\epsilon_r)
\otimes
F(-2l\epsilon_r)\supset F(
-2(m_1+l +1)\epsilon_r
-2l \epsilon_{r-1}).
\]
Passing to dual $\mathfrak u(r)$-modules and using Lemma \ref{dual tensor}, we see that this is further equivalent to
\[
F(2(1+m_1+l)\epsilon_1)
\otimes
F(2l\epsilon_1)\supset F(
2(m_1+l +1)\epsilon_1
+2l \epsilon_2).
\]
Now Littlewood-Richardson rule implies that this containment is indeed true; we simply put all the boxes with label 1, from the one-row diagram corresponding to $F(2l\epsilon_1)$, below the first row of the one-row diagram corresponding to $F(2(1+m_1+l)\epsilon_1)$; this is possible since $2l\leq 2(1+m_1+l)$.

We now prove that the projection 
of the $K$-type 
$F(-\zeta-2(m_1+l +1)\epsilon_r-2l 
\epsilon_{r-1}
)
\subset
L(-\frac 12\zeta
)
\otimes
L(-\frac 12\zeta)$ 
under 
\[
J_k=J_1:
L(-\frac 12\zeta
)
\otimes
L(-\frac 12\zeta)
\to L(\lambda)
=L(-\zeta -2\epsilon_r)
\]
is non-zero.
Up to a non-zero
constant $J_1 f(z) =(\partial_z -\partial_w)f(z, w)|_{z=w}$ where $\partial_z f=\sum_{ij}
(\partial_{ij}f) dz_{ij}$
is the differentiation
in $z$ taking value
in $(\fp^+)'$
as differential forms.
Disregarding
the central character,
the highest weight
$
-2(m_1+l +1)\epsilon_r
-2l 
\epsilon_{r-1}
$ of the above K-type is a sum
of the highest
weight $-2(m_1+l +1)\epsilon_r$ 
in the first tensor 
factor and the weight 
$-2l 
\epsilon_{r-1}$
in the second tensor factor in
(\ref{F-tensor-F}).
Now the highest weight
vector
in the first tensor factor
$F(-\frac 12\zeta -2
(m_1+l +1)\epsilon_r)
)
$
is
$z_{rr}^{m_1+l+1}$,
whereas a weight
vector of weight
$-2l 
\epsilon_{r-1}$
in 
the second tensor factor $F(-\frac 12\zeta-2l\epsilon_r)$
is of the form
$w_{r-1, r-1}^{l}$. Here we
use the realization
of $Sp(n, \mathbb R)/U(n)$ as the
bounded symmetric domain of symmetric matrices.
Thus the highest
weight vector
in
$
F(-\zeta-
2(m_1+l+1)\epsilon_r
-2l \epsilon_{r-1}
)
$
is
of the form
$$
f(z, w) 
=z_{rr}^{m_1+l+1}
w_{r-1, r-1}^{l}
+ \text{Rest} 
$$
where the term
Rest 
is a sum of monomials
which have lower degree in $z_{rr}$ than 
$z_{rr}^{m_1+l+1}$.
We perform
now the differentiation
$J_1f$. It has
a form
\[
J_1f(z)
= 
(m_1+l+1)z_{rr}^{m_1+l}
z_{r-1, r-1}^{l} dz_{rr}
+ \text{Rest}.
\]
It is clear
that the terms in Rest
all have lower degree in
$z_{rr}$. Thus $J_1f\ne 0$, that
is, the $K$-type 
$F(-\zeta
-2(m_1+l+1)\epsilon_r
-2l \epsilon_{r-1}
)$
will indeed appear
in $L(\lambda)$.

The result for $k=1$
can be summarized
as that there 
is a highest
weight vector $f_1(z, w)$
of weight $-(2+2m_1+m_2)
\epsilon_r-m_2\epsilon_{r-1}$
 with degree of vanishing along the diagonal $z=w$ being exactly $k=1$.
For any $k>1$
the element $
(z_{rr}-w_{rr})^{k-1}
f_1(z, w)$
has then exact
vanishing degree $k$
and is a highest weight 
vector since 
$(z_{rr}-w_{rr})^{k-1}$
is a highest weight 
vector, and
the pointwise product of two highest weight vectors is again a highest weight vector. Moreover, 
it is clear that
this element is
also in $
L(-\frac 12\zeta)
\otimes
L(-\frac 12\zeta).
$

Finally all $K$-types in 
$L(-\frac 12\zeta)
\otimes
L(-\frac 12\zeta)$
have appeared in
$L(-\zeta-k\gamma_1)$
by the above procedure using 
Taylor expansion
for $k=0, 1, 2, \cdots$. Thus
$L(-\zeta-k\gamma_1)$
contains precisely 
those $K$-types that 
appeared in this way, 
by the decomposition  (\ref{ten-prod}).

Now let $k=k'+\frac 12$ be a half integer, $k'\ge 0$.  We realize $
L(-\zeta -k\gamma_1)
=L(-\zeta -(2k'+1)\epsilon_r)
$
in the space
$L(-\frac 12\zeta-\epsilon_r)
  \otimes 
L(-  
\frac12\zeta)  
$ given in  (\ref{tensor-product-var}), 
as the space
of sums of functions
of the form
$A(z)B(w)$
where $A(z)$ are
 $\mathbb C^r$-valued
 polynomials and $B(w)$
 are scalar-valued
 polynomials.
The same method
above can be carried
out in this case:
The function 
$(z_{rr}-w_{rr})^{k-1}f_z(z, w)$
is replaced by
$$
\tilde f(z, w)=
(z_{rr}-w_{rr})^{k'}
(z_{rr}^{m_1+l}
w_{r-1, r-1}^l e_r + Rest).
$$
The relevant functions
in $z$ are in the space
$L(-\frac 12 \zeta -\epsilon_r)$
since 
$L(-\frac 12 \zeta -\epsilon_r)$
is multiplicity free,
and the highest
weight vectors in 
the space are of the form
$z_{rr}^n e_{rr}$.
The image of 
$\tilde f(z, w)$
under the intertwining
map (\ref{bi-lin-int-v})
is non-zero
by the vanishing
property along
the diagonal $z=w$.
The rest of the proof
is exactly the same.
\qed
\smallskip

\begin{remark}
Note that when $k=0$
the $K$-types
in $L(-\zeta)$
are of the form
$-\zeta-m_1\gamma_1
-m_2\gamma_2
=-\zeta-2m_1\e_1
-2m_2\e_2
$
and can not be obtained
from the above
result by formally
putting $k=0$. This
is due to the different
behavior of the dual pair
correspondence.
\end{remark}

\section{The case $(G,K)=(SO_0(2, m), SO(2)\times SO(m)), m\ge 4$}

We notice that 
$SO_0(2, m)$ 
can only appear  in the dual
pairs $SO(2, m)\times Sp(n,  \mathbb R)$
where both factors are non-compact. Thus
it is generally rather difficult
to use see-saw dual pairs to
find $K$-types as
the decomposition of
the metaplectic representation under 
$SO(2, m)\times Sp(n,  \mathbb R)$ is already
quite demanding.

The root system is $\Delta=\Delta_c\cup \Delta_n$
and the relevant
roots are as follows: 

\noindent If
 $m$ is even, 
$$
\Delta_c^+ =\{\e_i\pm \e_j, 1\le i < j\le 
\dfrac m2
\},\qquad 
\Delta_n^+ =\{\e_0 \pm \e_j, 1\le  j\le 
\dfrac m2\},$$
with
$$\rho = (
\dfrac m2, \dfrac m2 -1, \cdots, 0).
$$
If $m$ is odd,
\begin{eqnarray*}
\Delta_c^+ &=&
\{\e_i\pm \e_j, 1\le i < j\le \frac{m-1}2\}
\cup
\{\e_i, 1\le i \le \frac{m-1}2
\}
,\\
\Delta_n^+ &=&\{\e_0 \pm \e_j, 1\le  j\le 
\frac{m-1}2
\}\cup\{\e_0\},
\end{eqnarray*}
with
$$
\rho = (
 \frac{m-1}2, 
\frac{m-1}2-1, \cdots, 0)+\frac 12 (1, 1, \cdots, 1).
$$
In all cases,
$$
\gamma_1=\e_0-\e_1<
\gamma_2=\beta=\e_0+\e_1,
\quad \zeta=\dfrac 12(\gamma_1+\gamma_2) =\e_0,$$
the root multiplicity
$a$  in (\ref{ab-def})
and genus (\ref{genus}) $p$
are  $a=m-2$,  $p=m$.

We write a $\fk$-weight $\lambda$ as
\[
\lambda =x\e_0  + x_1\e_1 +\dots +
x_{[\frac{m}2]}\e_{[\frac{m}2]
}
=(x|  x_1, \dots,  x_{[\frac{m}2]}),
\]
and the corresponding
representation of $\fk=
\fso(2, {\mathbb C}) + \fso(m, {\mathbb C})$ as 
\[
F(\lambda) = \mathbb C^{x} \otimes 
E^{
( x_1, \dots,  x_{[\frac{m}2]}
)
}.
\]
The module $L(\lambda)$
we consider has $\fg$-highest
weight $\lambda = -k\gamma_1 +x\zeta$,
our $x$ corresponds to $z$ in \cite{EHW}
by  $z=x + m-1$, and there is only 
one reduction point at $z=1$, i.e., at $x= -(m-2)$ with $\lambda = -k\gamma_1 -(m-2)\zeta$.

%\begin{theorem}\label{so-m-2-case}
%Let $m\ge 4$. The $\fk$-types 
%  appearing 
%  $L(\lambda)$ at the last reduction point 
%    $\lambda=-k\gamma_1 -(m-2)\zeta$
%    are of the form   $$
%    \mu=\lambda -m_1\gamma_1 -m_2\gamma_2 -j (\e_1-\e_2),  \quad\ m_1\ge m_2\ge 0, \ \ 0\le j\le\min\{k,  \frac{m_1-m_2}2\}, $$
%with multiplicity one.
%\end{theorem} 

\smallskip

 {\it Proof of Theorem
\ref{thm1.1} for $(G,K)=(SO_0(2, m),
SO(2)\times SO(m))$.}
We use \cite[Theorem 5.2]{EJ}
  and 
some known   results 
  on tensor products of $SO(n)$-representations \cite{Okada-98, St}.

  For notational convenience we write 
$$-m_1\gamma_1 -m_2 \gamma_2
=-m_1(\e_0-\e_1) - m_2 (\e_0+\e_1)
=-(d+2c) \e_0+d\e_1,
$$
with 
$c=m_2, d=m_1-m_2 $,
and  as $\fso(2) + \fso(m)$-representations, 
$$
F(-m_1\gamma_1- m_2\gamma_2) =
\mathbb C^{-(d+2c)} \otimes  E^{(d)}, (d)=(d, 0, \dots, 0).
$$

  The point $\lambda=-k\gamma_1+x\zeta$, $x=-(m-2)$, is
  the only reduction point, and thus $L=L(\lambda)=N/M$, where
  $M$
  is the maximal submodule of $N$.
Now a resolution of $L$ is given
in  \cite[Theorem 2]{EW} 
where each module is generated
by a fixed $K$-type over $S(\mathfrak p^-)$
and the length 
of the resolution
(excluding the first and
the last $0$ terms) 
is  $|\Delta_\lambda^+
\cap \Delta_n^+| +1$, where 
$\Delta_\lambda^+$ is a certain
subroot system of $\Delta$.
In the present case it is
easily checked that $\Delta_{\lambda}^+$ 
is  
the singleton
$\{\gamma_2\}$,
and 
the corresponding
 subset $\mathcal W_\lambda^{\mathfrak k, i}$
is a single element $\{s_{\gamma_2}\}$
for $i=1$
and otherwise empty.
The generating $K$-type
for    $M$ is 
given \cite[Theorem 2]{EW} by 
$\tilde\lambda
=(s_{\gamma_2}(\lambda +\rho))^+ -\rho
$
where $\mu^+$ is
the $\mathfrak k$-dominant weight
and is in the orbit of $\mu$ under the Weyl group of $\Delta_c$,
and more explicitly,
$$
\tilde\lambda=\lambda-\gamma_2=
 -k\gamma_1 - (m-2) \zeta -\gamma_2
  =(-m+1-k; k-1, 0, \dots, 0).
  $$ 
  Alternatively,
this can be obtained by \cite[Theorem 5.2]{EJ},
which states
 that the maximal submodule $M$ is  generated by the PRV component of
 $F(\lambda) \otimes \fp^-$ which in our case is indeed $F(\tilde{\lambda})$.

We write
  $$
  \tilde \lambda
  =   \lambda-\gamma_2
  = ( \lambda  +\gamma_1) 
-( \gamma_1 +\gamma_2)
    =-(k-1)\gamma_1 -m \zeta
    $$
 which is on the shifted line 
of our $\lambda=-k\gamma_1 -(m-2)\zeta$ 
    with
    the  parameters $(k, m-2)\mapsto (k-1, m)$, namely $k$ replaced by $k-1$,
    and $m-2$ by $m$, and
    is in the continuous part
    of the analytic continuation
    of the line $-(k-1)\gamma_1+x\zeta$. Thus
    $   M=S(\fp^-)\otimes F(\mu)$ is the whole Verma module.
    We compute the corresponding $\fk$-types
    in $N$ and in $M$.
We have    \begin{equation*}
      \begin{split}
    N&=S(\fp^-)\otimes F(\lambda)=
    \oplus_{c, d \ge 0} F(c+d, d) \otimes  F(\lambda)\\
    &
    =     \oplus_{c, d \ge 0}
    (\mathbb C^{(2c+d)} \otimes E^{(d)})\otimes
    (\mathbb C^{-k-p+2} \otimes E^{(k)})
    \\
    &=
    \oplus_{c, d\ge 0 } \mathbb C^{-p+2 -k -2c-d}
\otimes(    \oplus_{y: (d, k) } E^{y})
      \end{split}
    \end{equation*}
    where $\oplus_{y: (d, k) }$
    stands for the sum
    $$
    E^{(d)} \otimes  E^{(k)} =
    \oplus_{y: (d, k) }
        E^{y}
    $$
    multiplicities counted.
Likewise,
    $$
    M=S(\fp^-)\otimes F(\mu)=
 \oplus_{c, d\ge 0} \mathbb C^{-m+1 -k -2c-d}
\otimes(  \oplus_{y: (d, k-1) }      E^{y}).
    $$
    To find the highest weights in $L=N/M$ we match the central characters in the sum of
    $N$ and $M$,    rewrite $d\ge 0$ as $d-1$, $d\ge 1$, to get
    $$
    M=\oplus_{c\ge 0, d\ge 1} \mathbb C^{-m+2 -k -2c-d}
\otimes(  \oplus_{y: (d-1, k-1) }      E^{y}).
    $$

    The space $E^{(d)}$
    can be realized as the space
    of spherical harmonics of degree $d$ on $\mathbb R^{m}$ and
    the tensor product decomposition of
    $E^{(d)}\otimes
    E^{(k)}   $
    of such representations
    is known; indeed
 the tensor product decomposition
    $E^{(d)}\otimes
    E^{(k)}   $
    is multiplicity free and 
    it can  obtained
    using the general result  \cite[Corollary 2.16]{St}.    
        We have
    \begin{equation}
      \label{Stembrige}
          \begin{split}
E^{(d)}\otimes 
E^{(k)}
&= \oplus_{i, j\ge 0, i+j\le \min\{k, [\frac{d+k}2]
\}} E^{(d+k-2i-j, j, 0, \dots, 0)}
\\
&
=
\oplus_{j=0}^{\min\{k, 
[\frac{d+k}2]\}
} \oplus_{i=0}^{\min\{k, 
[\frac{d+k}2]
\} -j} 
 E^{(d+k-2i-j, j, 0, \dots, 0)}.
          \end{split}
         \end{equation}
  and correspondingly 
  \begin{equation}
    \begin{split}
E^{(d-1)}\otimes 
E^{(k-1)}
&= \oplus_{i, j\ge 0, i+j\le \min\{k-1, 
[\frac{d+k}2]-1
\}} E^{(d+k-2i-2-j, j, 0, \dots, 0)}
\\
&=
\oplus_{j=0}^{\min\{k-1,
[\frac{d+k}2]-1\}}
\oplus_{i=0}^{\min\{k-1, 
[\frac{d+k}2]-1\} -j} 
 E^{(d+k-2i-2-j, j, 0, \dots, 0)}    
    \end{split}. 
  \end{equation}

Thus $L=N/M$ contains precisely
    \begin{equation*}
      \begin{split}
 L&=N/M=\oplus_{c, d\ge 0 } \mathbb C^{-m+2 -k -2c-d}
\oplus_{j=0}^{\min\{k, 
[\frac{d+k}2]\}
}  
 E^{(d+k-j, j, 0, \dots, 0)}\\
&
=
\oplus_{c, d\ge 0, j\le k, 0\le j\le \frac{d}2\}} F(-a\zeta  -k\gamma_1
 -(c+d)\gamma_1 -c\gamma_2 -j(\e_1-\e_2) ) \\
& =
\oplus_{m_1\ge m_2\ge 0, 0\le j\le k, j\le \frac{m_1-m_2}2\}} F(
\lambda  -m_1\gamma_1 -m_2\gamma_2 -j(\e_1-\e_2) )
          \end{split}
\end{equation*}

This proves our claim.
  \qed

  \begin{remark}
  When $m=4$
  our result
  can also be obtained by using the fact
  that $\mathfrak{so}(2, 4)
  =\mathfrak{su}(2, 2)
  $ and
the  result in 
Section  \ref{sect-su}
  for   $\mathfrak{su}(r+b, r)$.
  Indeed, the highest
  weight $\lambda-m_1\gamma_1 -m_2\gamma_2-j(\epsilon_1-\epsilon_2)$
  of $\mathfrak{so}(2,
  \mathbb C)
  + \mathfrak{so}(4, 
  \mathbb C) $
considered here corresponds 
  to the highest weight
  $(-m_2, -(m_1+k); 
  m_1+k-j, m_2+j$
  of $\mathfrak{gl}(2,\mathbb C)
  +\mathfrak{gl}(2, \mathbb C)$  considered  there
  and it is easy to verify that 
  the conditions 
coincide.
   \end{remark}

\medskip

%%
%%The identity
%    (\ref{Okada-98})
%can be reformulated 
%  as 
%%  \begin{equation}
%% \label{tensor-key}
%%    F(-l\gamma_1)\otimes 
%%    F(-m\gamma_1) 
%%    =\oplus_{i+j\le l} F(-m\gamma_1 -(l-i)\gamma_1 + i\gamma_2 - j\alpha_0 )
%%  \end{equation}
%%  for $l\le m$, where %%$\alpha_0=\e_1-\e_2$
%%  is the simple compact %%root connected to %%$\gamma_1=\e_0-\e_1$.
%%  This result will be %%needed in the coming %%subsections.
%% There is a difference %%between tube and non-tube %%domains; see Lemma
%% 12.8 for e6-case.

\section{The case
$(G, K)=(SO^*(2n), U(n))$, $n\ge 4$}
\label{so*}

Let $r=[\dfrac n2]$ be the real rank of $G$. The complexified Lie algebras are 
$\fg=\fso(2n, \mathbb C)$ and $\fk=\fg\fl(n,\mathbb{C})$, 
with the  following choices of the relevant roots and central
character $\zeta$:
 $$\Delta^+=\{\e_i\pm \e_{j}; 1\le i<j\le  n\},\quad  
 \Delta^+_n=\{\e_i+\e_{j}; i=1, \cdots, n\},
 $$
$$
\beta=\e_{1}+\e_2,\ 
\gamma_1=\e_{n-1}+\e_{n},\ 
\gamma_2=\e_{n-3}+\e_{n-2},\  
\cdots, 
\gamma_r=\e_{n-2r+1}+\e_{n-2r+2},
$$
$$
\zeta=\frac{1}{2}1^n=\frac{1}{2}\sum_{j=1}^n\epsilon_j, \quad  \rho=\sum_{j=1}^n (n-j)\epsilon_j.
$$
The relevant constants
in (\ref{ab-def})-
(\ref{genus}) in this case are $p=2n-2$, and $(a, b)=(4, 0)$ if $n$ is even and
$(a, b)=(4, 2)$ if $n$ is odd.

%%
%%\begin{theorem}
%%our \label{thm-skew-ktype}For any positive integer $k$, the $K$-types of the unitary highest weight module $L(\lambda)$, $\lambda=-k\gamma_1-4\zeta$, are all of the form $F(\lambda)\otimes F(0,\dots,0,-a_4,-a_3,-a_2, -a_1)$
%%with\begin{equation}\label{cond a}\begin{split}(a_1, a_2, a_3, a_4)=(m_1, m_1-j, m_2+j, m_2),\quad 2j\le m_1-m_2\end{split}\end{equation}with multiplicity 1.\end{theorem}

\smallskip

{\it Proof of Theorem
\ref{thm1.1} 
for $(G,K)=(SO^\ast(2n),
U(n))$.}
 We use the realization
 (\ref{tensor-min})
  of $L(-4\zeta-k\gamma_1)$
as a subspace of
$L(-2\zeta)\otimes 
L(-2\zeta)
$, with
\[
L(-2\zeta)=
\sum_{\mu \ge
0}
F(-2\zeta)\otimes 
F(0, \cdots 0, -\mu, -\mu)
\]
as  in
(\ref{last-W-pt}).
Using Lemma
\ref{lem LR2} and Lemma \ref{dual tensor}, we see that any 
K-type in 
$L(-2\zeta)\otimes 
L(-2\zeta)$ is of the form
$$F(-4\zeta)\otimes
F(0, \cdots, 0, -a_4, -a_3, -a_2, -a_1),$$
with $
F(a_1, a_2, a_3, a_4, 0, \cdots, 0)$
being  a component in 
\begin{equation}
\label{mu-mu'} 
(\oplus_\mu F(\mu, \mu, 0, \cdots, 0))
\otimes 
(\oplus_{\mu'}F(\mu', \mu', 0, \cdots, 0)),
\end{equation}  namely it has
maximal length $4$
with the condition $a_1-a_2=a_3-a_4$.

Recall that the domain $D$ is realized
as  {a} matrix ball
in the Jordan triple
$V=\{z\in M_{n, n};
z=-z^t\}$
of skew-symmetric complex $n\times n$-matrices.
Now let $V_2\subset V$ be the
Jordan triple of $4\times 4$-skew-symmetric matrices realized as the Jordan subtriple of V consisting of skew-symmetric matrices of block diagonal form $\text{diag}(0, z)$, $z\in V_2$.
The symmetric pair corresponding to $V_2$ is
$(\fg'_0, \fk'_0)=(\mathfrak {so}^\ast(8),
\mathfrak u(4))$
which is also $(\fg'_0, \fk'_0)
=(\mathfrak{so}(6, 2), \fso(6)+\fso(2))$,
and the corresponding symmetric space
$D':=D\cap V_2$
is the symmetric
space of $SO(6, 6)$ considered
in the previous section.
The correspondence between 
$\mathfrak{gl}(4, \mathbb C)
=\mathfrak{u}(4)^{\mathbb C}
$-roots and 
 $\mathfrak{so}(2,
 \mathbb C)+
\mathfrak{so}(6, \mathbb C)$-roots is
$$
\epsilon_1^{\mathfrak{u}(4)}=
\frac 12 (\epsilon_0 -\epsilon_1-\epsilon_2-\epsilon_3),
\epsilon_2^{\mathfrak{u}(4)}
=\frac 12 (\epsilon_0 -\epsilon_1 +\epsilon_2 +\epsilon_3),
$$
$$
\epsilon_3^{\mathfrak{u}(4)}
=
\frac 12 (\epsilon_0 +\epsilon_1 -\epsilon_2 +\epsilon_3),
\epsilon_4^{\mathfrak{u}(4)}
=\frac 12 (\epsilon_0 +\epsilon_1 +\epsilon_2 -\epsilon_3).
$$

We use the notation as in  Subsection
  \ref{Sub-sym-pair}
and 
compare with the corresponding
decomposition 
$L(-2\zeta)
\otimes 
L(-2\zeta)
=\sum_{k=0}^\infty
L(-4\zeta -k\gamma_1)
$
with 
$L'(-2\zeta)
\otimes 
L'(-2\zeta)
=\sum_{k=0}^\infty
L'(-4\zeta -k\gamma_1)
$
for ${\mathfrak g'}$.
Firstly,
the $K'$-types
 for the $(\fg', K')$-module
$L(-4\zeta-k\gamma_1)'$
are $F(-4\zeta)'
\otimes
F(\mu)'
$,
$$
\mu =(-m_2, -(m_2+j), -(m_2-j), -m_1),
$$
by our Theorem
\ref{thm1.1} 
in the case of $SO_0(2, 6).$
Secondly,
all the
$
K'$-
highest weights
in 
$L(-4\zeta-k\gamma_1)'$
are also
$
K$-highest weights by  Lemma \ref{reduct-subdom}. Thirdly, all
 $K$-types 
in $L(-2\zeta)
\otimes
L(-2\zeta)$
are covered
by the above counting, for all $k\ge 0$, since 
the decomposition
of (\ref{mu-mu'}) is
independent of
$n\ge 4$. 
This completes the proof.
\qed
\begin{remark} We remark
that we actually
proved that any
$K$-type appearing in $L(-4\xi-k\gamma_1)$ is obtained
by the projection of
a $K$-type in 
$F(-2\xi+\mu)
\otimes F(-2\xi -k\gamma_1) 
\subset L(-2\xi)
\otimes F(-2\xi -k\gamma_1) 
\subset  L(-2\xi)
\otimes L(-2\xi) 
$ through the bi-linear differential operator $J_k$. 
\end{remark}

\begin{corollary}
\label{cor fd}
The representation 
  $(k, k, 0, 0)$ of $Sp(4)$ 
  appears
  with multiplicity one in the representation 
 $\mu=(\mu_1, \mu_2, \mu_3, \mu_4, 0, \cdots, 0)$ of $U(8)$, where 
 $\mu$ corresponds
 to the dual of a
 K-type of $L(-k\gamma_1 -4\zeta)$
 under the $U(n)\times U(8)$
 duality.

\end{corollary}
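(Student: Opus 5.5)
We plan to deduce the corollary from Howe duality for the compact dual pair $(U(n),U(8))$ acting on the polynomials on $M_{n,8}$, combined with the $K$-type description of $L(-k\gamma_1-4\zeta)$ just obtained for $SO^*(2n)$.

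The first step is to recall the standard realization of $L(-2\zeta)$ through the dual pair $(SO^*(2n),Sp(1))$. The representation $L(-4\zeta)$ arises from $(SO^*(2n),Sp(2))$, and the tensor product $L(-2\zeta)\otimes L(-2\zeta)$ arises from $(SO^*(2n),Sp(1)\times Sp(1))$, all acting inside the Fock space $\mathcal P(M_{n,4})$. Alternatively, and following the paper's conventions, we can work inside $\mathcal P(M_{n,8})$ with $Sp(4)\subset U(8)$, since the doubled space carries $Sp(2)$ in the quaternionic normalization.

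In this model the $\fg$-module $L(-k\gamma_1-4\zeta)$ corresponds to the irreducible $Sp(4)$-representation with highest weight $(k,k,0,0)$. This is consistent with the tensor product decomposition of Theorem~\ref{tensor-min}, where $L(-a\zeta-k\gamma_1)$ is the $k$-th summand, and with the restriction $Sp(2)\times Sp(2)\subset Sp(4)$ of $(k,k,0,0)$ containing $(0)\otimes(0)$-type pieces paired with $L(-2\zeta)\otimes L(-2\zeta)$. We would confirm this correspondence from \cite{KV} or \cite{NOT}, matching central characters: the $K$-types $F(-4\zeta)\otimes F(\dots)$ have the correct determinant twist.

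Next we use the see-saw pair
\[
\begin{matrix} SO^*(2n) & \longleftrightarrow & Sp(4)\\ \bigcup && \bigcap\\ U(n) & \longleftrightarrow & U(8).\end{matrix}
\]
By $(GL_n,GL_8)$-Howe duality, $\mathcal P(M_{n,8})=\bigoplus_\mu F_n(\mu)^*\otimes F_8(\mu)$. Reciprocity then gives
\[
\mult_{U(n)}\bigl(F(-4\zeta)\otimes F(\mu)^*,L(-k\gamma_1-4\zeta)\bigr)=\mult_{Sp(4)}\bigl((k,k,0,0),F_8(\mu)|_{Sp(4)}\bigr).
\]
Here $\mu=(\mu_1,\mu_2,\mu_3,\mu_4,0,\dots,0)$ has length at most $4$, in agreement with the $K$-types of the theorem, which are $F(-4\zeta)\otimes F(0,\dots,0,-a_4,\dots,-a_1)$.

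By Theorem~\ref{thm1.1}(3), the $K$-types in question have multiplicity exactly one. It follows that $(k,k,0,0)$ occurs in $F_8(\mu)|_{Sp(4)}$ with multiplicity one whenever $\mu$ is dual to a $K$-type of $L(-k\gamma_1-4\zeta)$, which is the claim.

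The main obstacle is the precise identification of the $Sp(4)$-representation attached to $L(-k\gamma_1-4\zeta)$, including the normalization of $\zeta$ and the dualization in the Howe duality. If the literature identification proves awkward, we would verify it directly. The $Sp(4)$-highest weight vector of weight $(k,k,0,0)$ should be the $k$-th power of a suitable Pfaffian-type form that realizes $(z_{n-1,n}-w_{n-1,n})^k$ under the embedding $L(-2\zeta)\otimes L(-2\zeta)\subset\mathcal P(M_{n,8})$. This matches the lowest $K$-type $\lambda=-k\gamma_1-4\zeta$ and hence, by irreducibility, the whole correspondence.
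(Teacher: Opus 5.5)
Your argument is the paper's own see-saw argument: reciprocity plus the multiplicity one from Theorem~\ref{thm1.1}. However, the step that identifies $L(-k\gamma_1-4\zeta)$ with the $Sp(4)$-module $(k,k,0,0)$ inside $\mathcal P(M_{n,8})$, with $Sp(4)\subset U(8)$ of rank four, fails. The paper's proof asserts the same identification.

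\begin{itemize}
\item \textbf{The twist is wrong.} On $\mathcal P(M_{n,q})$ the group $U(n)$ acts with the metaplectic twist $\det^{-q/2}=F(-q\zeta)$, since $\zeta=\frac12 1^n$. This is exactly why $L(-2\zeta)$ sits in $\mathcal P(M_{n,2})^{Sp(1)}$. For $q=8$ every $U(n)$-type is $F(-8\zeta)\otimes F(\nu)^*$, so every highest-weight entry is $\le -4$. But $F(-4\zeta-k\gamma_1)$ has highest weight $(-2,\dots,-2,-2-k,-2-k)$.
\item \textbf{Consequences.} $L(-4\zeta-k\gamma_1)$ does not occur in $\mathcal P(M_{n,8})$ at all; the partner of $(k,k,0,0)$ is $L(-8\zeta-k\gamma_1)$. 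So the central-character check you propose would fail. There is also no ``doubling'': the $Sp(2)\times Sp(2)$-invariants of $\mathcal P(M_{n,8})=\mathcal P(M_{n,4})\otimes\mathcal P(M_{n,4})$ give $L(-4\zeta)\otimes L(-4\zeta)$, not $L(-2\zeta)\otimes L(-2\zeta)$.
\item \textbf{The literal statement is false.} Take $k=2$ and $\mu=(3,3,1,1,0,0,0,0)$, which is dual to the $K$-type $\lambda-\gamma_1-\gamma_2$ of $L(-2\gamma_1-4\zeta)$. Since $\ell(\mu)\le 4$, Littlewood's restriction rule $U(8)\downarrow Sp(4)$ applies, with $\delta$ running over partitions whose columns all have even length. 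It gives multiplicity $c^{\mu}_{(2,2),(2,2)}+c^{\mu}_{(1,1,1,1),(2,2)}=1+1=2$ for $(2,2,0,0)$.
\end{itemize}

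The repair is the model you wrote down first. On $\mathcal P(M_{n,4})$ the twist is $F(-4\zeta)$, and $L(-2\zeta)\otimes L(-2\zeta)=\mathcal P(M_{n,4})^{Sp(1)\times Sp(1)}$. The $Sp(2)$-modules with $Sp(1)\times Sp(1)$-fixed vectors are exactly the $(k,k)$, each with a one-dimensional fixed space, because $Sp(2)/(Sp(1)\times Sp(1))\cong S^4$. The partner of $(k,k)$ has lowest $K$-type $F(-4\zeta)\otimes F(k,k,0,\dots,0)^*=F(-4\zeta-k\gamma_1)$. Comparing with (\ref{ten-prod}) term by term then gives $L(-4\zeta-k\gamma_1)\leftrightarrow(k,k)$ for $Sp(2)=Sp(4,\mathbb C)\cap U(4)$.

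Your reciprocity formula, with $U(4)$ in place of $U(8)$, together with the multiplicity one of Theorem~\ref{thm1.1}, then proves the following. The representation $(k,k)$ of $Sp(2)$ occurs exactly once in $F(\mu_1,\mu_2,\mu_3,\mu_4)$ of $U(4)$ whenever $\mu$ is dual to a $K$-type of $L(-k\gamma_1-4\zeta)$. This is the form in which the corollary holds. It is also immediate from the multiplicity-free branching $Spin(6)\downarrow Spin(5)$, since $SU(4)\cong Spin(6)$ and $Sp(2)\cong Spin(5)$.
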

\begin{proof}
We recall
 \cite{NOT}
the
dual pair correspondence
for the
metaplectic
representation
of $Sp(8n, \mathbb R)$
restricted 
to the 
see-saw pairs
$SO^\ast (2n)
\times Sp(4)$,
and $U(n)\times U(8)$. The representation
$L(\lambda)$, 
$\lambda =-k\gamma_1-4\zeta$ of $SO^\ast (2n)$ 
 corresponds 
to the representation 
$(k,k, 0, 0)$ of $Sp(4)$, and
the representation
$(\mu_1, ..., \mu_4,
0, \cdots, 0)$ of $U(n)$ 
corresponds to the representation 
$(\mu_1, ..., \mu_4, 0\cdots, 0)$ of  $U(8)$.
The result 
now follows
from Theorem \ref{thm1.1}  and general
principles
of see-saw pairs \cite{NOT}.
\end{proof}

\section{The case $(G,K)=(E_{6(-14)}, Spin(10)\times SO(2))$ }
\label{e-6-case}

The Dynkin diagram of $E_6$ is
\[
\begin{tikzcd}[arrows=-]
  E_6{:} &[-2em]
  \stackrel{1}{\circ} \arrow[r] &
  \stackrel{3}{\bullet} \arrow[r] &
  \stackrel{4}{\bullet} \arrow[r]\arrow[d]
  &\stackrel{5}{\bullet} \arrow[r] &
  \stackrel{6}{\bullet} \\
 & && \stackrel{\bullet}{2}  &&
\end{tikzcd},
\]
the black roots  $\{\alpha_2, \cdots, \alpha_6\}$
being compact roots and $\alpha_1$ non-compact.
The compact and non-compact positive roots are 
$$\Delta_c^+=\{\e_i\pm \e_j; 5\ge i>j\ge 
1\},
$$
$$\Delta_n^+=\{\dfrac 12 (\sum_{i=1}^5 
(-1)^{\nu_i} 
\e_i -\e_6-\e_7 +\e_8); 
\text{$\sum_{i=1}^5 {\nu_i} $ is even}\},
$$
with
$$
\alpha_1=
\dfrac 12 (1, -1, 
\cdots,-1; -1  -1, 1), \, \alpha_2 =\e_1+\e_2,\, \alpha_j=\e_{j-1}-\e_{j-2},\,  3\le j\le 6, 
$$
$$\gamma_1=\alpha_1<\gamma_2 =
\dfrac 12 (-1, 1, 1, 1, -1; -1,  -1, 1),$$
$$
\beta=\beta_1
=\dfrac 12 (1, \cdots, 1; -1, -1, 1)>\beta_2
=\frac 12(-1, \cdots, -1, 1; -1, -1, 1)
$$
$$
\rho=(0, 1, 2, 3, 4; -4, -4, 4), \zeta=
\dfrac 23 (-\e_6-\e_7+\e_8);
$$
the simple root connected to $\gamma_1$ is $\alpha_3=\e_2-\e_1$
with the Killing form $(\beta, \rho)=11=p-1$.
The Jordan characteristic  $(r, a, b)=(2, 6, 4)$ and
the
genus $p=12$;
the  root 
multiplicity 
of  $\frac{\gamma_2-\gamma_1}2$ is $6$,
and 
the multiplicity
of the roots 
$-\frac{\gamma_1
}2$ and $-\frac{\gamma_2}2$
is equal to 4.

We shall consider the maximal
tube domain defined by 
the sub-pair
$$(\fg',  \fk')=
  (\fso(10, \mathbb C),
  \fso(2, \mathbb C)+\fso(8, \mathbb C))$$
  of $(\fg, \fk)$.
  Now 
  the spaces $\mathfrak k_{12}$ and 
    $\mathfrak k_{12}'$ in Lemma \ref{root-sps} 
    are of dimension
     $12$,
    and $\mathfrak k_{11}' +
    \mathfrak k_{22}'\subset \ft'\subset \mathfrak t$ is in the Cartan subalgebra 
     of $\fk'$, so 
    Lemma \ref{root-sps} is now verified by counting
    dimensions.

We consider  $\lambda=
-k\gamma_1 + x\zeta$, 
$x=1-p +z=-11 +z$. 
Note
here that $(\gamma_1, \beta)=0$.
The root systems
$Q(\lambda)
$
and $R(\lambda)$
are $Q(\lambda)=R(\lambda)
=
\fsu(1, 5)$, namely
it corresponds to the
Hermitian symmetric
space $SU(1, 5)/U(5)$ in $\mathbb C^5$
with $SU(1, 5)/U(5) \times
SU(1, 1)/U(1)$ being
a maximal Hermitian symmetric sub-domain in
the domain $E_{6(-14)}/Spin(10)\times SO(2)$.
The last 
reduction point is at $z=5$, $x=-6=-a$,
and $\lambda=-k\gamma_1 -6\zeta$. 

We find now the $K$-types
in $L(-a\zeta -k\gamma_1)$.

\smallskip
 {\it Proof of Theorem
\ref{thm1.1} for $(G,K)=(E_{6(-14)}, Spin(10)\times SO(2)
)$.}
The space $L(-6\zeta -k\gamma_1) $ will be realized
as one  irreducible component 
in the tensor product
decomposition (\ref{ten-prod}).
We consider 
the tensor products of the $K$-types, and we claim that
\begin{equation}
\label{F-l-times-m}
F(-l\gamma_1)\otimes F(-m\gamma_1)=
\sum_{i+j\le l}
F(-(l+m)\gamma_1 -i (\gamma_2-\gamma_1)-j\alpha_3).
\end{equation}
We use the short-hand notation
$$1^5_- := (-1, 1, 1, 1, 1),\quad 
(\dfrac 1 2)^5_- := \dfrac 12(-1, 1, 1, 1, 1),
$$
and then
$$
-\gamma_1 =(\dfrac 12)^5_- -\dfrac 34\zeta.
$$
A general weight of the form $-m_1\gamma_1-m_2\gamma_2-j\alpha_3$
can then be written as
$$
-m_1\gamma_1-m_2\gamma_2-j\alpha_3
=
c\e_5+  d (\dfrac 12)^5_- - j\alpha_3 +
( -\dfrac 34 d -\dfrac 32 c )\zeta
$$
and 
the corresponding representation is
$$
F(-m_1\gamma_1-m_2\gamma_2-j\alpha_3)
=E(c\e_5+  d (\dfrac 12)^5_- - j\alpha_3)
\boxtimes
F(
-\dfrac 34 (d+2c) )
$$
as representation of $\fso(10)+\fso(2)$, with $c=m_2,\ d=m_1-m_1$,
where $F(-\dfrac 34 (d+2c)
\zeta)
$ is  the one-dimensional
representation of $\fso(2)=\mathbb RZ$.

The tensor product
decomposition
of $F(-l\gamma_1)\otimes
F(-m\gamma_1)$, $\ l\le m$,  is given by
$$
F(-l\gamma_1)\otimes
F(-m\gamma_1)
=(
E( (\dfrac l2  )^5_-) \otimes 
E( (\dfrac m2)^5_-)
) 
\boxtimes 
F(-\dfrac 43 (l+m)\zeta),
$$
and 
\begin{equation*}
  \begin{split}
&\quad E( (\dfrac l2  )^5_-) \otimes 
E( (\dfrac m2 )^5_-)\\
&=\sum_{0\le p\le  q\le l} E(-p-\dfrac  12 (m-l),
p+\dfrac  12 (m-l),   q +\dfrac  12 (m-l),
q+\dfrac  12 (m-l),   \dfrac  12 (l+m))
\\
&=\sum_{0\le p\le  q\le l}
E(
(q +\dfrac 12(m-l))
1^5_- + (l-q)\e_5 -(q-p)\alpha_3
);    
  \end{split}
\end{equation*}
see \cite[Theorem 2.5(3)]{Okada-98}
and note that
here we are using a reversed  ordering.
This is precisely our claim
(\ref{F-l-times-m})
with $i=l-q,\ j=q-p$, $\ i+j=l-p\le l$,

Now we can 
consider the 
sub-symmetric pair
$(\fg'_0,  \fk'_0)=
  (\fso(2, 8),
  \fso(2)+\fso(8))$ in
  $(\fg_0, \fk_0)$
and use the
same steps
 as
in the proof of 
Theorem \ref{thm1.1}
for $(G, K)= (SO^*(2n), U(n))$.
\qed
\smallskip

\begin{rema+}
  It is possible to  prove the above results by first finding all
  $\fk$-types in 
  $
  L(-\dfrac a2\zeta)   \otimes 
  L(-\dfrac a2\zeta)=
  \sum_{k\ge 0}
  L(-a\zeta -k\gamma_1) $
  using the result of Okada \cite{Okada-98},
  and then performing induction to find  that  those $\fk$-types
  appear in   $L(-a\zeta -k\gamma_1) $.
  \qed
\end{rema+}

\section{The case $(G,K)=(E_{7(-25)}, E_6\times SO(2))$ }
\label{e-7-case}

%The $e_7$ case is somewhat
%easier than the $e_6$ case and we treat it first.
%We shall prove
%that some of the singular representation
%for $e_6$ can be obtained
%by co-normal differentiation
%of the minimal representations
%of $e_7$ via the embedding
%of the symmetric domain 
%of $E_{6(-14)}$ into that 
%of $E_{7(-25)}$ and we recover some multiplicity
%result in \cite{EHW} 
%for vector-valued holomorphic representations. 

The Dynkin diagram of $\fg=E_7$ consists of the following simple roots $\alpha_j$, $1\le j\le 7$,
   \[
\begin{tikzcd}[arrows=-]
  E_7{:} &[-2em] \stackrel{1}{\bullet}\arrow[r] &\stackrel{3}{\bullet}
  \arrow[r] &
  \stackrel{4}{\bullet}  \arrow[r] \arrow[d] &
  \stackrel{5}{\bullet} \arrow[r] &\stackrel{6}{\bullet}
  \arrow[r] &\stackrel{7}{\circ} \\
 & & &\stackrel{\bullet}{2} & && 
\end{tikzcd},
\] 
with $7$th root
$\alpha_7=\gamma_1$ being
the simple non-compact root, the six black roots 
forming the root system 
$\Delta_c^+=\Delta^+(e_6)$
of $e_6$ given in the previous section,
and with  non-compact roots
$$
\Delta_n^+=\{-\e_7+\e_8, \pm \e_i +\e_6; 1\le i\le 5 \}
\cup \{
\dfrac 12(
\sum_{i=1}^5 (-1)^{\nu(i)} \e_i
+\e_6-\e_7 +\e_8
); \text{$\sum_{i=1}^5 {\nu(i)}$ is odd}\},
$$
$$
\gamma_1
=\alpha_7=\e_6-\e_5,\ 
\gamma_2=\e_6+\e_5,\ 
\gamma_3=\beta
=\e_8-\e_7,
$$
$$
\rho
=(0, 1, 2, 3, 4, 5; -\frac{17}2,
 \frac{17}2), 
$$
$$\zeta=\dfrac 12(
\gamma_1 +
\gamma_2 +
\gamma_3) =\e_6 + \dfrac 12(-\e_7 + \e_8);
$$
here $(r, a, b)=(3,  8, 0)$
and the
genus $p=18$, $p-1=(\rho, \beta)$. See \cite{EHW}.
The root multiplicities of
$\frac{\gamma_{i}-\gamma_j}2$, $i>j$,
are $8$,
and sum space
$(\sum_{i=1}^3\mathfrak k_{jj})\cap \mathfrak n_{c}^+$
is of dimension $12$.

We shall choose
the symmetric subpair $(G', K')
=(SO_0(2, 10), {SO}(2)
\times SO(10)
)$ of tube
type obtained from 
the  $V_2$ space in
(\ref{V-2-1-0}), 
with $D'=G'/K'=D\cap V_2$, with
Harish-Chandra strongly orthogonal roots $\gamma_1<\gamma_2$. Lemma \ref{root-sps}
is also verified in this case by counting dimensions.

The scalar
holomorphic representation $L(x\zeta)$ has Wallach set
$x\in (-\infty, 8)\cup \{8, 4, 0\}$.

Let  $\lambda=-k\gamma_1 +x\zeta=\lambda_0
+z\zeta$. The root system $Q(\lambda_0)=R(\lambda_0)=\fso(2, 10)$.
There are two reduction points $z=5, 9$,  and at the last
point $z=9, x=-8$,  and  $\lambda=-k\gamma_1 -8\zeta$.
The decomposition (\ref{ten-prod}) is
$L(-4\zeta)
\otimes 
L(-4\zeta)=\sum_{k=0}^\infty L(-8\zeta-k\gamma_1). $
Our theorem
in this case is 
also stated in \cite[Lemma 13.10]{EHW}
without proof.

%%\begin{proposition} The $K$-types in $L(\lambda)$, $\lambda=-8\zeta -k\gamma_1$ are precisely of the form$$
%%F(\lambda -m_1\gamma_1-m_2\gamma_2 -j\alpha_6), 0\le j\le \min\{m_2, k\}$$
%%and they appear with multiplicity one.  
%\end{proposition}

\smallskip

 {\it Proof of Theorem
\ref{thm1.1} for $(G,K)=(E_{7(-25)}, E_6\times SO(2)
)$.}
There is  a known formula \cite[Corollary 2.16]{St} 
computing the
tensor product
$
F(\nu)
\otimes
F(l\gamma)
$
for any minuscule weight $\gamma$ for any
simple Lie algebra, and we apply the formula to our case;
the decomposition is also verified in \cite{Ki-Wy} by
using computer algebra for larger $m$ and $l$.
We recall first
that
the representation
$\fp^-$ of $\mathfrak k=\mathfrak e_6$ is
of highest weight
$-\gamma_1$,  
 the fundamental representation $\omega_6$ dual to
$\alpha_6$, 
and is known to be minuscule
\cite[Section 7, Part A]{St}. 
We claim 
that 
\begin{equation}
\label{e6-tensor}
F(-m\gamma_1)
\otimes
F(-l\gamma_1)
=\sum_{i+j\le l}
F(
-(m+l-i)\gamma_1
-i\gamma_2 -j\alpha_6)
\end{equation}
(This might
be also proved
by using repeatedly 
\cite[Lemma 4.5]{EHW}.)

In our notation the 
 result
\cite{St} 
states that
$$
F(-m\gamma_1)
\otimes
F(-l\gamma_1)
=\sum_{
  \xi_1\le
\dots  \le\xi_l
}
F(
-m\gamma_1
+\xi_1+
\dots + \xi_l)
$$
where the sum is over all $\xi_j\in W[-\gamma_1]$
in the orbit of $-\gamma_1$  of the Weyl group
of $E_6$ such that
$-m\gamma_1 
+\xi_1+
\dots + \xi_j$
are dominant, $1\le j\le l$.
Here $m\ge l\ge 1$, 
the ordering
$\xi_1\le \xi_2\dots \le \xi_l,
\, \xi_j \in W[-\gamma_1]
$ are defined in the sense of the Bruhat ordering, i.e.,
the ordering obtained from the Weyl group $W$.
The claim 
(\ref{e6-tensor}) 
is equivalent to 
that the
only possible chains
in the sum
are
$$
\underbrace{-\gamma_2=\cdots=-\gamma_2}_{i}
\lneq
\underbrace{ \eta =\cdots=\eta}
_{j}
\lneq
\underbrace{-\gamma_1 =\cdots=-\gamma_1}_{l-i-j}
$$
where $
\eta=-(\e_6-\e_4) 
=-(\gamma_1 +\alpha_6)$, $\alpha_6=\e_5-\e_4$ being
the unique compact simple root connected to $\gamma_1$
in the Dynkin diagram. 

To prove our claim
we organize
the roots in $\mathfrak p^+$,
i.e. the roots $\Delta_n^+$
as the strongly orthogonal
roots $\gamma_1,
\gamma_2,
\gamma_3,
$ and three groups
of roots
\begin{equation}
\label{gp-1}
\{\frac{\gamma_1 +\gamma_2}2
+\epsilon_{12}^j,
1\le j\le 8\}
=\{\epsilon_6 \pm \epsilon_i, 1\le i\le 4\},
\end{equation}
\begin{equation}
\label{gp-2}
\{\frac{\gamma_1 +\gamma_3}2
+\epsilon_{13}^j,
1\le j\le 8
\}
=
\{\frac 12
(\epsilon_8 -
\epsilon_7 +
\epsilon_6 +
\epsilon_5 +
\sum_1^4 (-1)^{\nu(i)}
\epsilon_i), 
\sum_1^4\nu(i) \text{\, odd}
\}
\end{equation}
\begin{equation}
\label{gp-3}
\{\frac{\gamma_2 +\gamma_3}2
+\epsilon_{23}^j, 1\le j\le 8
\}
=\{\frac 12
(\epsilon_8 -
\epsilon_7 +
\epsilon_6 -\epsilon_5 +
\sum_1^4 (-1)^{\nu(i)}
\epsilon_i), 
\sum_1^4\nu(i) \text{\,even}
\}
\end{equation}
and the compact positive roots
$\Delta_c^+$
as
$$
\frac{\gamma_2 -\gamma_1}2
+\epsilon_{12}^j,\ 
\frac{\gamma_3 -\gamma_1}2
+\epsilon_{13}^j,\ 
\frac{\gamma_3 -\gamma_2}2
+\epsilon_{23}^j, \ 1\le j\le 8
$$
along with $12$
roots
\begin{equation}
\label{gp-4}
\epsilon_i\pm \epsilon_j,
\ 4\ge i > j\ge 1;
\end{equation}
all linear forms  $\epsilon_{12}^j,
\epsilon_{13}^j,
\epsilon_{23}^j, 1\le j\le 8
$ and the $12$ roots
are orthogonal to 
to $\gamma_1,
\gamma_2,
\gamma_3$. See e.g. \cite{Schmid, Up-AJM}
for general results.
Now $\xi_1=-\gamma_1, -\gamma_2$
are possible choice
for the first root in the chain.
If the first
roots $\xi_1\ne -\gamma_1,
\xi_2\ne -\gamma_2
$, then $\xi_1 =
\eta=-(\e_6-\e_4)$.
Indeed, 
$\xi_1=-\gamma_3$ is
not eligible since
$-m\gamma_1+\xi_1$
is not dominant.
Now let $-\xi_1$  be in
the first 
group (\ref{gp-1}),
then $\xi_1=-\eta=-(\e_6-\e_4)$
by the dominance condition. 
If $-\xi_1$ is in the second group
(\ref{gp-2}) then $\xi_1$
has to be the root
$\xi_1=-
\frac 12 (\epsilon_8-\epsilon_7+\epsilon_6+
\epsilon_5 -\epsilon_4
 -\epsilon_3
  -\epsilon_2
   +\epsilon_1
)$ by the dominance condition
with respect to the roots
(\ref{gp-4}). 
But then it
is not dominant with respect
to $\epsilon_1-\epsilon_4$.
If  $-\xi_1$
in the last  group
(\ref{gp-3})
then 
$\xi_1=-
\frac 12 (\epsilon_8-\epsilon_7+\epsilon_6
-
\epsilon_5 -\epsilon_4
 -\epsilon_3
 -\epsilon_2
   -\epsilon_1
)$ by the dominance  condition with respect to 
(\ref{gp-4}). But then 
$-m\gamma_1 +\xi_1$ is not
dominant with respect to
the simple root $\alpha_1$
of $\fe_6$, as
$(-m\gamma_1 +\xi_1, \alpha_1)
=-1<0$.
So the chain
$\xi_1\le \xi_2\dots \le \xi_l$
starts  with $\xi_1$ being
one of $-\gamma_1, -\gamma_2, 
\eta$.
If $\xi_1=\eta$
then  it is also
$\xi_1=\eta=s_{\alpha_6}(-\gamma_1)$, the reflection
defined by the simple root $\alpha_6$ connected to $\gamma_1$. Thus
the chain has
to be $\xi_1=
\cdots =\xi_l=\eta$
or with $l-j$ many $\eta$'s
and $j$ many $-\gamma_1$'s, since $-\gamma_1$ is the
only larger weight.
If $\xi_1= -\gamma_2
=-(\epsilon_6  +\epsilon_5)$
then again by the same computation
the next different
elements in the chain
have to be $\eta=-(\epsilon_6-\epsilon_4)$ and ending
with the same or with $-\gamma_1$.
This proves
our claim.

  Finally, we  use the
    same method
as above    by considering
    the similar
    tensor product 
    decomposition
    (\ref{e6-tensor})
for the     subdomain 
    $\widetilde D$ for $
    (\widetilde\fg, \widetilde\fk)=
   (\fso(2, 10), 
   \fso(2)+\fsl(10))$ in $D$. The corresponding
   Dynkin diagram is
   \[
\begin{tikzcd}[arrows=-]
D_6{:} &[-2em] \stackrel{3}{\bullet}   \arrow[r] &
\stackrel{4}{\bullet}  \arrow[r] \arrow[d] &
\stackrel{5}{\bullet} 
\arrow[r] &\stackrel{6}{\bullet}  \arrow[r] &
\stackrel{7}{\circ}  \\
& &
\stackrel {\bullet}{2}& &&
\end{tikzcd}
\]
obtained from the 
the Dynkin diagram 
       $\fe_7 $
       with the compact root $\alpha_1$ being deleted.
We can now use Lemma
\ref{reduct-subdom}
and 
our main Theorem for $\mathfrak{so}(2, 10)$.
Indeed the same
decomposition
(\ref{e6-tensor})
holds for the case
$\mathfrak{so}(2, 10)$, and the rest
of the proof is exactly similar 
to the previous cases in Sections
\ref{so*}
and \ref{e-6-case}.
\qed

\end{document}